\title{A spectrum-level Hodge filtration on topological Hochschild homology}
\author{Saul Glasman}
\newcommand{\angs}[1]{\langle #1 \rangle}
\newcommand{\Alg}{\mathbf{Alg}}
\newcommand{\bb}{\mathbb}
\newcommand{\colim}[1]{\underset{#1}{\text{colim }}}
\newcommand{\D}{\Delta}
\newcommand{\Fib}{\text{Fib}}
\newcommand{\Fun}{\text{Fun}}
\newcommand{\ho}{\text{ho}}
\newcommand{\Hom}{\text{Hom}}
\newcommand{\id}{\text{id}}
\newcommand{\im}{\text{im }}
\newcommand{\inc}{\subseteq}
\newcommand{\inj}{\hookrightarrow}
\newcommand{\iy}{\infty}
\newcommand{\llim}[1]{\underset{#1}{\text{lim }}}
\newcommand{\mb}{\mathbf}
\newcommand{\mc}{\mathcal}
\newcommand{\mf}{\mathfrak}
\newcommand{\Nat}{\text{Nat}}
\newcommand{\ol}{\overline}
\newcommand{\op}{\text{op}}
\newcommand{\os}{\overset}
\newcommand{\ot}{\otimes}
\newcommand{\si}{\sigma}
\newcommand{\Si}{\Sigma}
\newcommand{\sSet}{\text{sSet}}
\newcommand{\Set}{\text{Set}}
\newcommand{\Sp}{\textbf{Sp}}
\newcommand{\td}{\widetilde}
\newcommand{\toe}{\overset{\sim} \to}
\newcommand{\twa}{\widetilde{\mathcal{O}}}
\newcommand{\Top}{\textbf{Top}}
\newcommand{\ul}{\underline}
\newcommand{\X}{\times}
\newcommand{\Comm}{\text{Comm}}
\newcommand{\CAlg}{\textbf{CAlg}}
\newcommand{\CRing}{\textbf{CRing}}
\newcommand{\F}{\mc{F}}
\newcommand{\Mod}{\mb{Mod}}
\newcommand{\Ran}{\text{Ran}}
\newcommand{\THH}{\text{THH}}
\theoremstyle{definition}
\newtheorem{cor}[subsection]{Corollary}
\newtheorem{dfn}[subsection]{Definition}
\newtheorem{exa}[subsection]{Example}
\newtheorem{lem}[subsection]{Lemma}
\newtheorem{prop}[subsection]{Proposition}
\newtheorem{rem}[subsection]{Remark}
\newtheorem{thm}[subsection]{Theorem}
\begin{document}
\begin{abstract}
We define a functorial spectrum-level filtration on the topological Hochschild homology of any commutative ring spectrum $R$, and more generally the factorization homology $R \ot X$ for any space $X$, echoing algebraic constructions of Loday and Pirashvili. We investigate the properties of this filtration and show that it breaks $\THH$ up into common eigenspectra of the Adams operations. 
\end{abstract}
\maketitle

\section{Introduction}
In this paper, we seek to begin the development of a theory of functorial, spectrum-level Hodge filtrations on topological Hochschild homology ($\THH$) of commutative ring spectra and its cousins, such as $TR$ and $TC$.

What is a Hodge filtration? The original Hodge filtration is a filtration on the de Rham complex of a commutative ring $R$

\[\Omega^*(R) \to \cdots \to (\Omega^*(R))^{\leq n} \to \cdots \to (\Omega^*(R))^{\leq 0} = R\]
where the complex $(\Omega^*(R))^{\leq n}$ is the ``stupid truncation" of $\Omega^*(R)$, which is not a homotopy invariant of $\Omega^*(R)$ but depends on $R$ itself: it has the same terms as $\Omega^*(R)$ in degrees at most $n$, and is zero in degrees greater than $n$. This filtration gives rise to a spectral sequence known as the \emph{Hodge to de Rham spectral sequence}. In a celebrated sequence of papers beginning with \cite{Del71}, Deligne proved that for a $\bb{C}$-algebra $R$, the spectral sequence degenerates at $E_2$, giving rise to a \emph{mixed Hodge structure} on the de Rham cohomology of $R$.

In the special case where $R$ is regular, the Hochschild-Kostant-Rosenberg theorem \cite{HKR} gives an isomorphism of graded abelian groups
\[HH_*(R) \cong \Omega^*(R)\]
where $HH_*$ is Hochschild homology. In \cite{Lod89}, Loday interpreted the Hodge filtration as the $\gamma$-filtration associated to a $\lambda$-ring structure that exists on the Hochschild homology of an arbitrary commutative ring $R$, thus generalizing the Hodge filtration in an algebraic direction. Rationally, this filtration is canonically split, and it coincides with the decomposition by eigenspaces of the Adams operations that exist on any rational $\lambda$-ring.

Later, Pirashvili \cite{Pir00}, working rationally, used functor homology to give a generalization of the rational Hodge decomposition to what he called ``higher-order Hochschild homology". In the present work, we take Pirashvili's approach and run with it in a homotopy-theoretic direction, using a homotopy coend formula for topological Hochschild homology and its ``higher-order" variants to give spectrum-level filtrations with properties analogous to those of Loday and Pirashvili's original examples.

We work with quasicategories throughout, and we will liberally reference Lurie's blockbuster volumes \cite{HTT} and \cite{HA}. The structure of the paper is as follows. In Section \ref{ends}, we collect some useful facts about ends and coends in the $\iy$-categorical context. In Section \ref{tensor}, we discuss tensor products of commutative ring spectra with spaces. In Section \ref{Hodge}, we define our central object of study: for each simplicial set $X$, we define a Hodge-like filtration on the functor

\[\CRing \to \CRing : R \mapsto R \ot X\]

where $\CRing$ is the category of commutative ring spectra. Keeping $X$ free, we explore the geometric structure of our filtration. In the more technical Section \ref{mulstr}, we show that this filtration enjoys pleasant multiplicative properties. We then specialize to $X = S^1$, so that 
\[R \ot X \cong \THH(R).\]
Here, in Section \ref{Adams}, we identify the graded pieces of the filtration and show that they are eigenspectra for the Adams operations $\psi^r$, as they must be if we are to sensibly call our filtration a Hodge filtration.

In a forthcoming paper, we will describe how to lift the Hodge filtration on $\THH$ to a filtration by cyclotomic spectra, and thus obtain a filtration on topological cyclic homology $TC$. We also hope to show that the cyclotomic trace from $K$-theory to $TC$ makes the weight filtration on $K$-theory (various versions of which are explicated beautifully in \cite{Gra05}) compatible with the Hodge filtration on $TC$, ideally by showing that the trace is a map of spectral $\lambda$-rings (whatever these are) and by framing the filtrations on each side as $\gamma$-filtrations. In between, we plan to give explicit computations of the filtrations we define in interesting cases.

\section{Preliminaries on twisted arrow categories, ends and coends} \label{ends}
We'll first give a brief introduction to the definitions and basic properties of ends and coends of quasicategories; for a more thorough and classical treatment in the context of 1-categories, see \cite[Chapter IX]{Mac98}. Let $\mb{C}$ be a presentable symmetric monoidal $\iy$-category with colimits compatible with the symmetric monoidal structure - in particular, $\mb{C}$ has an internal hom right adjoint to the monoidal structure - and let $I$ be a small $\iy$-category. 

\begin{dfn} The \emph{twisted arrow category} $\twa_I$ is defined by
\[(\twa_I)_n = I_{2n + 1}\]
with faces and degeneracies given by
\[\tilde{d_i} x = d_{n - i} d_{n + 1  +  i} x\]
\[\tilde{s_i} x = s_{n - i} s_{n + 1 +  i} x.\]
We'll adopt the following unorthodox notation:
\[\twa^I := (\twa_I)^\op.\]
\end{dfn}
For a full discussion of twisted arrow categories, one should consult \cite[\S 4.2]{DAGX} or  \cite[\S2]{Bar14}, though bear in mind that these two sources differ by an $op$ in their definitions.

There is an evident functor $\mc{H}_I : \twa_I \to I^{op} \times I$ that takes an $n$-simplex $x$ of $\twa_I$ to $x|_{[0, n]} \times x|_{[n + 1, 2n + 1]}$. By \cite[Proposition 4.2.3]{DAGX} and \cite[Proposition 4.2.5]{DAGX}, it is a left fibration classified by the functor $I^\op \times I \to \Top$ that takes $X \times Y$ to $\Hom(X, Y)$. Let $\tau : (I^\op \X I)^\op \toe I^\op \X I$ be the natural equivalence, and let $\mc{H}^I$ denote the composite 
\[\mc{H}^I : \twa^I \os{\mc{H}_I^\op} \to (I^\op \X I)^\op \os{\tau} \to I^\op \X I.\]

If $T$ is any functor from $I^\op \times I$ to $\mb{C}$, we can obtain a functor $\twa_I \to \mb{C}$ by precomposing with $\mc{H}_I$ or a functor $\twa^I \to \mb{C}$ by precomposing with $\mc{H}^I$.
\begin{dfn} The \emph{coend} of $T$ is defined by
\[\int^I T := \colim{\twa^I} \mc{H}^I \circ T.\]
Dually, we define the \emph{end} of $T$ by
\[\int_I T := \llim{\twa_I} \mc{H}_I \circ T.\]
\end{dfn}
In particular, if $F : I^\op \to \mb{C}$ and $G : I \to \mb{C}$ are functors, we can define a functor $F \tilde{\times} G : I^\op \times I \to \mb{C} \times \mb{C}$ by taking products. 
The symmetric monoidal structure on $\mb{C}$ induces a functor $\otimes : \mb{C} \times \mb{C} \to \mb{C}$, and by postcomposition with $\otimes$ we obtain a functor
\[F \boxtimes G : I^\op \X I \to \mb{C}\]
(this is the objectwise tensor product of functors, not to be confused with the Day convolution product). The central current of this paper involves coends of the form
\[\int^I F \boxtimes G\]
which we'll abusively denote
\[\int^I F \otimes G,\]
but we'll first discuss an end of interest.

\begin{prop} \label{borlox}
Let $\mb{D}$ be any $\iy$-category and let $F, G : I \to \mb{D}$ be functors, both covariant this time. Let $\Hom_{\mb{D}} : \mb{D}^\op \times \mb{D} \to \Top$ be the hom-space functor (one model for this is mentioned above). Defining
 \[\Hom(F(-), G(-)) = \Hom_{\mb{D}} \circ ( F^{op} \X G) \circ \mc{H}_I : \twa_I \to \Top,\]
there is a canonical equivalence
\[\int_I \Hom(F(-), G(-)) \cong \Nat(F, G)\]
which is functorial in both $F$ and $G$.
\end{prop}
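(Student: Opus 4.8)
The plan is to identify the end $\int_I \Hom(F(-),G(-))$ with the space of natural transformations by recognizing both as limits over the twisted arrow category, and then appealing to the standard quasicategorical computation of mapping spaces in functor categories. First I would recall that for functors $F, G : I \to \mathbf{D}$, the mapping space $\Map_{\Fun(I,\mathbf{D})}(F,G)$ — which is what $\Nat(F,G)$ denotes — admits a well-known description as an end: this is essentially the content of the formula $\Map_{\Fun(I,\mathbf{D})}(F,G) \cong \int_{i \in I} \Map_{\mathbf{D}}(F(i), G(i))$, valid in the $\iy$-categorical setting. So the real task is to match the definition of the end given in Section \ref{ends} — namely $\lim_{\twa_I} \mathcal{H}_I \circ T$ for $T = \Hom_{\mathbf{D}} \circ (F^{\op} \times G)$ — with whatever model for the functor-category mapping space one wishes to take as a black box.

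The cleanest route, I expect, is the following. By \cite[Proposition 4.2.3]{DAGX} / \cite[Proposition 4.2.5]{DAGX}, cited in the excerpt, the functor $\mathcal{H}_I : \twa_I \to I^\op \times I$ is a left fibration classified by $\Hom_I : I^\op \times I \to \Top$. Then the limit $\lim_{\twa_I} \mathcal{H}_I \circ T$ can be rewritten, using the theory of relative limits / Kan extensions along the cocartesian-to-Grothendieck correspondence, as the limit over $I^\op \times I$ of the right Kan extension, which is the functor $(i,j) \mapsto \Map_{\Top}(\Hom_I(i,j), \Hom_{\mathbf{D}}(F(i),G(j)))$. One then recognizes $\lim_{I^\op \times I}$ of this cotensor-valued functor, via the usual adjunction, as the space of sections of a certain left fibration — or, more directly, as $\Map_{\Fun(\twa_I, \Top)}(\ast, \mathcal{H}_I^* T)$ reinterpreted through the straightening equivalence — and this is precisely the description of natural transformations. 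Concretely: a point of the end is a coherently compatible family of maps $\Hom_I(i,j) \to \Hom_{\mathbf{D}}(F(i), G(j))$, and unwinding the case $i = j$ with the identity gives the underlying maps $F(i) \to G(i)$, while the compatibility over morphisms of $I$ encodes exactly the naturality coherences.

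The main obstacle — the step requiring genuine care rather than routine invocation — is making the identification of the end with the functor-category mapping space precise at the level of $\iy$-categories, since one cannot simply manipulate formulas as in the $1$-categorical case (\cite[Chapter IX]{Mac98}); one must either cite an existing comparison (e.g. the end formula for mapping spaces appears in work building on \cite{DAGX} and \cite{Bar14}, and a version is implicit in \cite{HA}) or construct the equivalence by hand via the straightening/unstraightening equivalence applied to the left fibration $\mathcal{H}_I$. I would therefore structure the proof as: (1) reduce to showing $\int_I \Hom(F(-),G(-))$ agrees with the standard model of $\Map_{\Fun(I,\mathbf{D})}(F,G)$; (2) use that $\mathcal{H}_I$ is the left fibration classified by $\Hom_I$ to rewrite the defining limit over $\twa_I$ as a limit over $I^\op \times I$ of a $\Top$-cotensor; (3) identify that limit with the mapping space in the functor category via the unstraightening equivalence. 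Functoriality in $F$ and $G$ is then automatic, since every construction in steps (1)–(3) is natural in the input functors — each is obtained by applying functors and (co)limits to the data of $F$ and $G$, so the equivalence assembles into a natural equivalence of bifunctors $\Fun(I,\mathbf{D})^\op \times \Fun(I,\mathbf{D}) \to \Top$.
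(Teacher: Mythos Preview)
Your outline is conceptually sound but, as written, begs the question at the decisive step. Step (1) is not a reduction at all: the equality $\Map_{\Fun(I,\mb{D})}(F,G) \simeq \int_I \Hom(F(-),G(-))$ is precisely the proposition. Step (2) is correct: because $\mc{H}_I$ is a left fibration, right Kan extension along it is computed fiberwise, and one obtains $\llim{I^\op \times I}\Map(\Hom_I(-,-), \Hom_{\mb{D}}(F(-),G(-)))$. But in step (3) your proposed mechanism---unstraightening---simply returns you to the space of sections of $S \to \twa_I$, which is the end again; you have gone around a loop rather than arrived at $\Nat(F,G)$. The informal sentence ``unwinding the case $i=j$ with the identity gives the underlying maps $F(i)\to G(i)$, while the compatibility\ldots encodes naturality'' is exactly right as intuition, and for $1$-categories it would be a proof, but in the quasicategorical setting this unwinding is the entire content of the proposition and cannot be waved through. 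The paper explicitly remarks that, at the time of writing, no proof of this end formula existed in the literature, so citing it is not an option either.

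The paper's proof is quite different in character from your plan: rather than manipulating limits and Kan extensions, it works strictly at the level of simplicial sets. It first identifies the end with the fiber of $\Fun(\twa_I,\twa_{\mb{D}}) \to \Fun(\twa_I, \mb{D}^\op\times\mb{D})$ over $(F^\op\times G)\circ\mc{H}_I$ (this much is close to your ``space of sections'' picture). The new content is then an \emph{explicit isomorphism of simplicial sets} between this fiber and $\Nat(F,G)$: one writes down mutually inverse maps $\alpha$ and $\beta$ on $k$-simplices, using the tautological surjection $q_k:\D^{2k+1}\to\D^1$ to pass between $(2k+1)$-simplices of $I$ (i.e.\ $k$-simplices of $\twa_I$) and $k$-simplices of $\D^1\times I$. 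Functoriality in $F$ and $G$ is obtained not by a general naturality argument but by replacing $I$ with $I \times \D^n$ throughout. What your approach would buy, were step (3) completed, is a model-independent argument; what the paper's approach buys is a self-contained proof that does not rely on any external $\iy$-categorical end formula.
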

\begin{proof}
This statement, which is elementary in the 1-categorical setting, becomes slightly tricky to prove for $\iy$-categories, and we don't know of a proof elsewhere in the literature. We thank Clark Barwick and Denis Nardin for a helpful conversation regarding this proof. The reader actually interested in understanding this proof should first note that it really exists at the level of combinatorics, not homotopy theory: everything is going to be strictly defined, and we'll be fuelled by isomorphisms of simplicial sets rather than equivalences.

The first idea is that $\int_I \Hom(F(-), G(-))$ should be the category of ways of completing the three-quarters-of-a-commutative square

\[\xymatrix{\twa_I \ar[d]^{\mc{H}_I} & \twa_{\mb{D}} \ar[d]^{\mc{H}_{\mb{D}}} \\
I^\op \times I \ar[r]_{ F^{op}\X G} & \mb{D}^\op \times \mb{D}
}\]
to a commutative square; that is, it should be identified with the pullback

\[\Fun(\twa_I, \twa_{\mb{D}}) \X_{\Fun(\twa_I, \mb{D}^\op \times \mb{D})} \{( F^{op} \X G) \circ \mc{H}_I\}.\]
Indeed, let $S$ be the pullback

\[\xymatrix{ S \ar[rr] \ar[d] & & \twa_{\mb{D}} \ar[d]^{\mc{H}_{\mb{D}}} \\
\twa_I \ar[rr]_{( F^{op} \X G) \circ \mc{H}_I} & &  \mb{D}^\op \times \mb{D}.}\]
Then $S \to \twa_I$ is a left fibration classifying the composite $\Hom(F(-), G(-))$, and the space of sections of this fibration is a model for $\int_I \Hom(F(-), G(-))$. This means that in the diagram

\[\xymatrix{\int_I \Hom(F(-), G(-)) \ar[d] \ar[r] & \Fun(\twa_I, S) \ar[d] \ar[r] & \Fun(\twa_I, \twa_{\mb{D}}) \ar[d] \\
\{\id\} \ar[r] & \Fun(\twa_I, \twa_I) \ar[r] & \Fun(\twa_I, \mb{D}^\op \times \mb{D}),
}\]
the left-hand square is a pullback, and the right-hand square is certainly also a pullback. The composite pullback square gives the desired description of $\int_I \Hom(F(-), G(-))$.

Let $T$ denote the pullback
\[\begin{tikzcd}
T \ar{r} \ar{d} & \Fun(\twa_I, \twa_\mb{D}) \ar{d}{\mc{H}_\mb{D}} \\
\Fun(I^\op \X I, \mb{D}^\op \X \mb{D}) \ar{r}{\mc{H}_I} & \Fun(\twa_I, \mb{D}^\op \X \mb{D}),
\end{tikzcd}\]
so that $T$ is the category of commutative diagrams that look like
\[\begin{tikzcd}
\twa_I \ar{r} \ar{d}{\mc{H}_I} & \twa_\mb{D} \ar{d}{\mc{H}_\mb{D}} \\
I^\op \X I \ar{r} & \mb{D}^\op \X \mb{D}.
\end{tikzcd}\]
We claim that there's a pullback square
\[\begin{tikzcd}
\Nat(F, G) \ar{r} \ar{d}{\mc{H}_{\Fun(I, \mb{D})}} & T \ar{d} \\
\{(F^\op, G)\} \ar{r} & \Fun(I^\op \X I, \mb{D}^\op \X \mb{D}).
\end{tikzcd}\]
Indeed, one can give an explicit isomorphism between the fiber of $T$ over $(F^\op, G)$ and $\Nat(F, G)$. We'll give the bijection between the $0$-simplices; the higher simplices follow identically but with more cumbersome notation. The aim here is to give a bijection between the set $\Nat(F,G)_0$ of natural transformations from $F$ to $G$ and the set (call it $U$) of ways of extending $(F^\op, G)$ to a functor from $\twa_I$ to $\twa_{\mb{D}}$. 

First we'll define $\alpha : \Nat(F, G)_0 \to U$. An element $x \in \Nat(F, G)_0$ is in particular a map $x : \D^1 \X I \to \mb{D}$. If $y$ is a $k$-simplex of $\twa_I$ - thus a $2k+1$-simplex of $I$ - we'll define
\[\alpha(x)(y) = x(q_k \X y)\]
where $q_k : \D^{2k + 1} \to \D^1$ is the map with
\[q_k(i) = \begin{cases} 
0 & i \leq k \\
1 & i > k.
\end{cases}\]

The inverse $\beta$ of $\alpha$ is given as follows. Suppose $Q : \twa_I \to \twa_\mb{D}$ is a functor lifting $F^\op \X G$, and let $y = (y_{\D^1}, y_I)$ be a $k$-simplex of $\D^1 \X I$. There's a unique factorization
\[\begin{tikzcd}
\D^k \ar{r}{y} \ar{d} & \D^1 \X I \\
\D^{2k + 1} \ar{ru}[below, yshift=-5pt, near end]{(q_k, y'_I)}
\end{tikzcd}\]
where $y'_I$ is formed by applying some degeneracies to $y_I$, and the only purpose of the above diagram is to give a compact description of exactly which degeneracies these are. Then we'll define $\beta(Q)(y)$ to be the $k$-simplex of $\mb{D}$ resulting from removing the same degeneracies from the $(2k + 1)$-simplex $Q(y'_I)$. 

Once the definitions are fully unwrapped, it's immediate that $\alpha$ and $\beta$ land where they're supposed to, and it's easy to check that $\beta \circ \alpha$ and $\alpha \circ \beta$ are the identities. 

Composing the pullback square defining $T$ with the one we've just derived, we get a pullback square
\[\begin{tikzcd}
\Nat(F, G) \ar{r} \ar{d} & \Fun(\twa_I, \twa_\mb{D}) \ar{d} \\
\{(F, G)\} \ar{r} & \Fun(\twa_I, \mb{D}^\op \X \mb{D}).
\end{tikzcd}\]
But we already have a name for the fiber in this square: it's $\int_I \Hom(F(-), G(-))$. This gives us our isomorphism
\[\Nat(F, G) \cong \int_I \Hom(F(-), G(-)).\]
Moreover, this entire argument is contravariantly functorial in $I$. In particular, we could replace $I$ with $I \X \D^n$ for some $n$, which makes our equivalence functorial in $F$ and $G$.
\end{proof}
Now let $i : I \to J$ be a functor of small $\iy$-categories, and assume again that $\mb{C}$ is presentable. If $F : J \to \mb{C}$ is a functor, then we write $i^* F$ for $F \circ i$. Our next goal is to describe an ``adjunction" formula inolving $i^*$ and the left Kan extension $i_!^\op : \Fun(I^\op, \mb{C}) \to \Fun(J^\op, \mb{C}).$
\begin{prop}\label{coendind}
Let $G : I^{op} \to \mb{C}$ be a functor, and let $i_! G$ be a left Kan extension of $G$ along $i^{op}$; explicitly, on objects, 
\[i_! (G)(j) = \colim{z \in I \X_J J_{j/}} G(z).\]
Then there is an equivalence
\[\int^I i^* F \otimes G \simeq \int^J F \otimes i_! G\]
which is functorial in $F$ and $G$. 
\end{prop}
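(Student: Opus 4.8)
The plan is to test the claimed equivalence against an arbitrary object $Z \in \mb{C}$ by mapping into it, recognize each resulting mapping space as a space of natural transformations via Proposition \ref{borlox}, match those two $\Nat$-spaces using the Kan-extension adjunctions, and conclude with the Yoneda lemma. (Recall from the start of the section that $\mb{C}$ is closed monoidal, so the internal hom $\underline{\Hom}$ is available.)

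Fix $Z \in \mb{C}$. By definition the left-hand coend is the colimit over $\twa^I$ of $\mc{H}^I \circ (G \boxtimes i^*F)$, so mapping into $Z$ turns it into a limit over $(\twa^I)^\op = \twa_I$. Unwinding $\mc{H}^I = \tau \circ \mc{H}_I^\op$, this limit is exactly the end $\int_I T_0$ of the functor $T_0 : I^\op \X I \to \Top$ that sends $(c, d)$ to $\Map_{\mb{C}}(G(d) \otimes F(i(c)), Z)$. The tensor–hom adjunction in $\mb{C}$ identifies $T_0(c, d) \simeq \Map_{\mb{C}}(F(i(c)), \underline{\Hom}(G(d), Z))$, which is precisely the functor to which Proposition \ref{borlox} applies, with the two covariant functors on $I$ being $i^*F$ and $\underline{\Hom}(G(-), Z)$. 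Hence
\[\Map_{\mb{C}}\Big(\int^I i^*F \otimes G,\, Z\Big) \simeq \Nat_{\Fun(I, \mb{C})}\big(i^*F,\, \underline{\Hom}(G(-), Z)\big).\]
Running the identical computation with $I$, $i^*F$, $G$ replaced by $J$, $F$, $i_!G$ yields $\Map_{\mb{C}}(\int^J F \otimes i_!G, Z) \simeq \Nat_{\Fun(J, \mb{C})}(F, \underline{\Hom}(i_!G(-), Z))$.

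It remains to identify these two spaces of natural transformations. Since $\underline{\Hom}(-, Z) : \mb{C}^\op \to \mb{C}$ is a right adjoint and so carries colimits to limits, and since $i_!G$ is computed by the colimit formula in the statement, the functor $\underline{\Hom}(i_!G(-), Z) : J \to \mb{C}$ is canonically the right Kan extension along $i : I \to J$ of $\underline{\Hom}(G(-), Z) : I \to \mb{C}$; that is, $\underline{\Hom}(i_!G(-), Z) \simeq i_* \underline{\Hom}(G(-), Z)$. The adjunction $i^* \dashv i_*$ then gives
\[\Nat_{\Fun(J, \mb{C})}\big(F,\, i_*\underline{\Hom}(G(-), Z)\big) \simeq \Nat_{\Fun(I, \mb{C})}\big(i^*F,\, \underline{\Hom}(G(-), Z)\big),\]
so $\Map_{\mb{C}}(\int^I i^*F \otimes G, Z) \simeq \Map_{\mb{C}}(\int^J F \otimes i_!G, Z)$ naturally in $Z$, and the Yoneda lemma produces the desired equivalence. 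This chain of equivalences is natural in $F$ and $G$ — either by inspection, or, as in the proof of Proposition \ref{borlox}, by replacing $I$ and $J$ throughout with $I \X \D^n$ and $J \X \D^n$ — which supplies the asserted functoriality.

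I expect the bookkeeping to be the only genuine obstacle: keeping the variances and the various $\op$'s straight so that the limit over $(\twa^I)^\op$ really does become the end $\int_I T_0$ with $T_0$ as above, and spelling out carefully that dualizing into $Z$ interchanges the left Kan extension $i_!$ (taken along $i^\op$) with the right Kan extension $i_*$ (taken along $i$). A more computational alternative — substituting the colimit formula for $i_!G$ directly into the coend, commuting colimits, and exhibiting a cofinal comparison of indexing categories — avoids internal homs but seems to demand a heavier fibrational apparatus, so I would favor the argument above.
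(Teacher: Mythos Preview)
Your argument is correct and matches the paper's proof almost exactly: map into a test object, turn the coend into an end, apply the tensor--hom adjunction and Proposition~\ref{borlox} to rewrite it as a space of natural transformations, then use a Kan-extension adjunction and Yoneda. The only cosmetic difference is that the paper curries out $G$ and invokes $i_! \dashv i^*$ directly, whereas you curry out $F$ and invoke $i^* \dashv i_*$ after observing $\underline{\Hom}(i_!G(-),Z) \simeq i_*\underline{\Hom}(G(-),Z)$; these are two sides of the same coin.
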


\begin{proof}
Let $T$ be a test object of $\mb{C}$. Then
\begin{align*}
 \Hom \left( \int^J F \otimes i_! G, T \right) & \simeq \int_J \Hom(i_! G \otimes F, T) \\
& \simeq \int_J \Hom(i_! G, \Hom(F(-), T)) \\
& \simeq \Nat(i_! G, \Hom(F(-), T)) \\
& \simeq \Nat(G, i^*\Hom(F(-), T)) \\
& \simeq \Nat(G, \Hom(i^* F(-), T)) \\
& \simeq \int_I \Hom(G, \Hom(i^*F(-), T)) \\
& \simeq \int_I \Hom(G \otimes i^*F, T) \\
& \simeq \Hom \left( \int^I i^* F \otimes G, T \right),
\end{align*}
and each equivalence is functorial in $F$ and $G$, which gives the result.
\end{proof}

\begin{prop} \label{constend}
For any functor $F : I \to \mb{C}$,
\[\int^I  \mathbbm{1}\ot F \simeq \colim I F\]
where $\mathbbm{1}$ is the constant functor at the unit of $\mb{C}$.
\end{prop}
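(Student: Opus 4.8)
The plan is to exhibit $\int^I \mathbbm{1}\ot F$ as a colimit taken along a cofinal functor into $I$. First I would unwind the definitions: since $\mathbbm{1}$ is the constant functor at the tensor unit, the objectwise tensor $\mathbbm{1}\boxtimes F : I^\op\X I\to\mb{C}$ is canonically equivalent to $F\circ\mathrm{pr}_2$, where $\mathrm{pr}_2 : I^\op\X I\to I$ is the projection onto the covariant factor. Consequently
\[\int^I\mathbbm{1}\ot F \;=\; \colim{\twa^I}\bigl(\mc{H}^I\circ(\mathbbm{1}\boxtimes F)\bigr)\;\simeq\;\colim{\twa^I}(F\circ e),\qquad e := \mathrm{pr}_2\circ\mc{H}^I : \twa^I\to I.\]
It therefore suffices to prove that $e$ is cofinal, for then $\colim{\twa^I}(F\circ e)\simeq\colim{I}F$ naturally in $F$, by the defining property of cofinal functors (\cite[\S 4.1]{HTT}).

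To see that $e$ is cofinal I would apply Quillen's Theorem A (\cite[Theorem 4.1.3.1]{HTT}): one must show that $\twa^I\X_I I_{i/}$ is weakly contractible for every $i\in I$. The key is to unpack $e$. From the definition $\mc{H}^I = \tau\circ\mc{H}_I^\op$ and the fact that $\tau$ interchanges the two factors, one computes that $e = \mathrm{pr}_1\circ\mc{H}_I^\op$, where now $\mathrm{pr}_1 : I\X I^\op\to I$. Since $\mc{H}_I$ is a left fibration and $\mathrm{pr}_1$ is a Cartesian fibration, $e$ is a composite of Cartesian fibrations, hence itself a Cartesian fibration; and its fibre over $i$ is $\bigl(\mc{H}_I^{-1}(\{i\}\X I)\bigr)^\op$. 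As $\mc{H}_I$ is the left fibration over $I^\op\X I$ classified by $\Hom_I$, restricting it over $\{i\}\X I$ yields the left fibration over $I$ classified by $\Hom_I(i,-)$, whose total space is $I_{i/}$; thus the fibre of $e$ over $i$ is $(I_{i/})^\op$, a category with a terminal object and so weakly contractible. For each $i$ the pullback $\twa^I\X_I I_{i/}\to I_{i/}$ is then a Cartesian fibration with weakly contractible fibres whose base $I_{i/}$ has an initial object, hence is weakly contractible; it follows that the total space $\twa^I\X_I I_{i/}$ is weakly contractible. So $e$ is cofinal, and the proof is complete.

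I expect the step demanding the most care to be the identification $e = \mathrm{pr}_1\circ\mc{H}_I^\op$ and the ensuing description of the fibres of $e$: this is routine but requires a patient chase through the opposites and the swap equivalence $\tau$ packaged into the definition of $\mc{H}^I$. Functoriality in $F$ comes for free, since the comparison equivalence supplied by a cofinal functor is natural in the diagram. (One can also give a purely formal proof along the lines of Proposition \ref{coendind}: for a test object $T$ use $\Hom_{\mb{C}}(\mathbbm{1}\ot F(-),T)\simeq\Hom_{\mb{C}}(F(-),T)$ together with Proposition \ref{borlox} to identify $\int_I\Hom_{\mb{C}}(\mathbbm{1}\ot F(-),T)$ with $\Nat(F,\Const_T)$, and then finish using the adjunction $\colim_I\dashv\Const$ and the Yoneda lemma.)
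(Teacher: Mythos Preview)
Your proof is correct and follows the same overall strategy as the paper: both reduce to showing that the source map $\twa^I \to I$ (your $e$) is cofinal via Quillen's Theorem~A. The difference lies only in how the contractibility of the comma categories is established. The paper works combinatorially: it writes down an explicit simplicial model for $X_{/s}$ and exhibits a right adjoint to the inclusion of a subcategory with a final object. You instead observe that $e$ is a Cartesian fibration (as a composite of the right fibration $\mc{H}_I^\op$ with a projection), identify its fibers as opposites of coslices $(I_{j/})^\op$, and then conclude by the general fact that a Cartesian fibration with weakly contractible fibers is a weak homotopy equivalence. Your approach is cleaner and more conceptual, and avoids any explicit simplicial bookkeeping; the paper's argument has the virtue of being entirely self-contained. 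One small point: the last ``it follows'' in your argument relies on the fact that a Cartesian fibration with weakly contractible fibers is a weak homotopy equivalence; this is standard (dualize \cite[Proposition 4.1.3.3]{HTT} and use \cite[Proposition 4.1.1.3(3)]{HTT}), but it would be worth citing. Your parenthetical alternative via Proposition~\ref{borlox} and the $\colim_I \dashv \Const$ adjunction is also valid and gives a genuinely different, more formal route that sidesteps Theorem~A entirely.
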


\begin{proof}
We have
\[\int^I  \mathbbm{1} \ot F \simeq \colim{X \to Y \in \twa^I} F(X)\]
so it's enough to prove that the source map $s : \twa^I \to I$ is cofinal. By Joyal's version of Quillen's Theorem A \cite[Theorem 4.1.3.1]{HTT}, this is the same as proving that for each $X \in I$, the category $X_{/s} = \twa^I \X_I I_{X/}$ is weakly contractible as a space.

We may take as a model for $X_{/s}$ the simplicial set whose $n$-simplices are $2n + 2$-simplices of $I$ with leftmost vertex $X$. Let $X'_{/s}$ be the subcategory whose $n$-simplices are those $2n + 2$ simplices $\si$ of $I$ for which $\si|_{[0, n+ 1]}$ is totally degenerate at $X$. $X'_{/s}$ has the totally degenerate $2$-simplex at $X$ as a final object, so it suffices to show that the inclusion $X'_{/s} \inj X_{/s}$ is an equivalence. But the functor that sends a $2n + 2$ simplex $\si$ of $I$ to $s_0^{n + 1} d_1^{n + 1} \si$ is right adjoint to this inclusion.
\end{proof}

\section{Tensor products of commutative ring spectra with spaces}\label{tensor}

Let $\mc{F}$ be the category of finite sets and $\mc{F}_*$ its pointed counterpart, and let $E$ be a commutative ring spectrum. Given a simplicial set $X$, the tensor product or factorization homology $E \ot X$ is by definition the colimit of the constant $X$-diagram in $\CAlg$ valued at $E$; see \cite{GTZ} and \cite[\S 1]{MSV} for explorations of these ideas. We aim to give a topological version of the simplicial formula for $E \ot X$ in \cite[\S 3.1]{GTZ}. 

The following lemma is due to Barwick, and the proof is a near-verbatim reproduction of one communicated by him:
\begin{lem} \label{froj}
For $S$ a simplicial set, we denote by $\sSet_{/S}$ the category of simplicial sets over $S$ endowed with the covariant model structure \cite[Proposition 2.1.4.7]{HTT}. Suppose $j : S \to S'$ is a map of simplicial sets. Then the pullback functor $j^* : \sSet_{/S'} \to \sSet_{/S}$ and its left adjoint $j_!$ form a Quillen pair \cite[Proposition 2.1.4.10]{HTT}.

Now suppose
\[\begin{tikzcd}
Y' \ar{r}{g} \ar{d}{p'} & Y \ar{d}{p} \\
X' \ar{r}{f} & X
\end{tikzcd}\]
is a strict pullback square of simplicial sets in which $p$, and therefore $p'$, is smooth in the sense of \cite[Definition 4.1.2.9]{HTT}. Then the natural transformation
\[\mb{L}p'_! \circ \mb{R} g^* \to \mb{R} f^* \circ \mb{L}p_!\]
is an isomorphism of functors $\ho \, \sSet_{/Y} \to \ho \, \sSet_{/X'}.$
\end{lem}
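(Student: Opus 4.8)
The plan is to reduce the statement to a base-change assertion about the covariant model structure that can be checked by comparing fibrant replacements, using the characterization of smooth maps. First I would recall what needs checking: the natural transformation $\mathbf{L}p'_! \circ \mathbf{R}g^* \to \mathbf{R}f^* \circ \mathbf{L}p_!$ is the derived mate of the (strict) commutation $p'_! \circ g^* = f^* \circ p_!$ coming from the pullback square, and to show it is an isomorphism in the homotopy category it suffices to evaluate both sides on a cofibrant object $T \to Y$ and exhibit a weak equivalence. Since every object of $\sSet_{/Y}$ is cofibrant for the covariant model structure, $\mathbf{L}p_!(T) = p_!(T) = T \to X$ with the composite projection; so the content is that for the fibrant replacement, $p'_! g^*$ of a fibrant replacement of $T$ over $Y$ agrees, after applying $f^*$, with a fibrant replacement of $T$ over $X$.

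The key step is to invoke \cite[Proposition 4.1.2.15]{HTT} (or the surrounding material on smooth maps): if $p : Y \to X$ is smooth, then for any map $f : X' \to X$ with pullback $g : Y' \to Y$, the functor $g^*$ preserves covariant (i.e.\ left-anodyne-generated, ``Joyal-Kan'') weak equivalences, and more precisely pulls back a covariant fibrant replacement over $Y$ to a covariant fibrant replacement over $Y'$. Concretely: pick a fibrant replacement $T \xrightarrow{\sim} \widehat{T}$ in $\sSet_{/Y}$ with $\widehat T \to Y$ a left fibration. Then $g^* T \to g^* \widehat T$ is, by smoothness of $p$, again a covariant equivalence, and $g^*\widehat T \to Y'$ is a left fibration (left fibrations are stable under base change), hence $g^*\widehat T$ computes $\mathbf{R}g^*(T)$. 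Applying $p'_!$ (which is strict, just composition with $p'$) and then noting $p'_! g^* \widehat T = f^* p_! \widehat T$ by the strict pullback, we land on $f^*(\widehat T \to X)$. It remains to see that $\widehat T \to X$ is a covariant fibrant replacement of $T \to X$ — i.e.\ that $p_!$ of a covariant equivalence over $Y$ is a covariant equivalence over $X$ (clear, since $p_!$ is just post-composition and left-anodyne maps are preserved) and that $\widehat T \to X$ is a left fibration over $X$ — and this last point is exactly where smoothness of $p$ is used again, via \cite[Proposition 4.1.2.15]{HTT}, which says that if $\widehat T \to Y$ is a left fibration and $Y \to X$ is smooth then the composite $\widehat T \to X$ need not be a left fibration in general, so one instead fibrantly replaces once more over $X$ and checks the comparison map is an equivalence; chasing the triangle of fibrant replacements over $Y$, over $Y'$ and over $X$ then gives the mate map and shows it is an isomorphism.

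I expect the main obstacle to be bookkeeping the three model structures ($\sSet_{/Y}$, $\sSet_{/Y'}$, $\sSet_{/X'}$, and transiently $\sSet_{/X}$) simultaneously and making sure each ``$\mathbf{R}$'' and ``$\mathbf{L}$'' is computed by an honest (co)fibrant replacement that is \emph{compatible} with the strict square, so that the strict identity $p'_! g^* = f^* p_!$ can be promoted to the derived level. The one genuinely nontrivial input is the stability of covariant equivalences under pullback along a smooth map; everything else (cofibrancy of all objects, $p_!$ and $p'_!$ being exact as mere post-composition, left fibrations stable under base change) is formal. A clean way to organize it is to cite the Quillen-pair statement in the first paragraph of the lemma to get that $g^*$ and $f^*$ are right Quillen, so $\mathbf{R}g^*$ and $\mathbf{R}f^*$ exist, and then to reduce, as above, to the single assertion that $\mathbf{R}g^*$ can be computed \emph{without} fibrant replacement on objects pulled back from $Y$ — that is precisely the homotopy-base-change property of smooth maps.
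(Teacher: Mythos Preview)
Your plan has the right overall shape---reduce to comparing explicit fibrant replacements and invoke smoothness---but the place where smoothness actually enters is not where you put it, and the step you wave through is the whole content of the lemma.

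First, the claim that $g^*T \to g^*\widehat{T}$ is a covariant equivalence over $Y'$ ``by smoothness of $p$'' is both unnecessary and, for arbitrary $T$, unjustified: $g$ is a base change of the arbitrary map $f$, not of the smooth map $p$, and $g^*$ (being right Quillen) has no reason to preserve weak equivalences between non-fibrant objects. You don't need this claim anyway, since $\widehat{T}$ is already fibrant over $Y$ and so $g^*\widehat{T}$ computes $\mathbf{R}g^*(T)$ on the nose. The paper sidesteps the issue entirely by starting with a left fibration $q: Z \to Y$ rather than an arbitrary $T$.

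Second, and this is the genuine gap: after fibrantly replacing $Z$ by $\widetilde{Z}$ over $X$, you must show that $Z \times_X X' \to \widetilde{Z} \times_X X'$ is a covariant equivalence over $X'$. ``Chasing the triangle'' does not accomplish this; it is exactly the nontrivial step. The paper's mechanism is the fiberwise criterion \cite[Proposition 4.1.2.18]{HTT}: both $Z \to X$ (a left fibration composed with the smooth map $p$, hence smooth by \cite[Proposition 4.1.2.15]{HTT}) and $\widetilde{Z} \to X$ (a left fibration, hence smooth) are smooth over $X$, so the covariant equivalence $\eta: Z \to \widetilde{Z}$ induces weak equivalences on all fibers $Z_x \to \widetilde{Z}_x$. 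Pulling back along $f$ preserves fibers, and since $Z \times_X X' \to X'$ and $\widetilde{Z} \times_X X' \to X'$ are again smooth (the former factors as a left fibration followed by the smooth $p'$), the same criterion applied over $X'$ converts the fiberwise equivalence back into a covariant one. So the real input is not ``pullback along a smooth map preserves covariant equivalences'' but rather that covariant equivalences between objects which are themselves smooth over the base can be detected, and hence transported along arbitrary base change, fiberwise.
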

\begin{proof}
Suppose $q: Z \to Y$ is a left fibration, and denote by $\ol{r} : \td{Z} \to X$ the fibrant replacement of $r := p \circ q$ in $\sSet_{/X}$; in particular, we have a covariant weak equivalence
\[\begin{tikzcd}[column sep = small]
Z \ar{rr}{\eta} \ar{rd}{r} & & \td{Z} \ar{ld}{\ol{r}} \\
& X.
\end{tikzcd}\]
By \cite[Proposition 4.1.2.15]{HTT}, both $r$ and $\ol{r}$ are smooth. Hence by \cite[Proposition 4.1.2.18]{HTT}, for any vertex $x \in X$, the induced map $\eta_x : Z_x \to \td{Z}_x$ s a weak equivalence of simplicial sets. Now since $p'$ is a smooth map as well, it follows that the natural map 
\[\begin{tikzcd}[column sep = small]
Z \X_X X' \ar{rr}{\eta'} \ar{rd}{r'} & & \td{Z} \X_X X' \ar{ld}{\ol{r}'} \\
& X'
\end{tikzcd}\]
is a covariant weak equivalence if and only if, for any point $\xi \in X'$, the induced map $\eta'_\xi : (Z \X_X X')_\xi \to (\td{Z} \X_X X')_\xi$ on the fiber over $x$ is a weak equivalence of simplicial sets. But this is true, since we can identify $(Z \X_X X')_\xi$ with $Z_{f(\xi)}$ and $(\td{Z} \X_X X')_\xi$ with $\td{Z}_{f(\xi)}$.
\end{proof}

\begin{cor}
Retaining the notation of Lemma \ref{froj}, let $\mb{C}$ be a presentable $\iy$-category and let $k : Y \to \mb{C}$ be a functor. Then we have an equivalence of functors $X' \to \mb{C}$
\[p'_! g^* k \toe f^* p_! k\]
where now $(-)^*$ is restriction and $(-)_!$ is left Kan extension.
\end{cor}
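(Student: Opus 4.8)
The map $\theta : p'_! g^* k \toe f^* p_! k$ asserted in the statement should be the Beck--Chevalley transformation attached to the square: because $\mb{C}$ is presentable, $p^*$ and $p'^*$ each have a left and a right adjoint, and combining the unit of $(p_!, p^*)$, the counit of $(p'_!, p'^*)$ and the canonical identification $g^* p^* \simeq (pg)^* = (fp')^* \simeq p'^* f^*$ produces a natural transformation
\[ p'_! g^* k \to p'_! g^* p^* p_! k \simeq p'_! p'^* f^* p_! k \to f^* p_! k \]
of functors $X' \to \mb{C}$, functorial in $k$. I claim it is an equivalence.

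My plan is to reduce to the case $\mb{C} = \Top$. There, straightening--unstraightening \cite[\S 3.2]{HTT} identifies the covariant model structure on $\sSet_{/S}$ with a presentation of $\Fun(S, \Top)$ under which pullback $j^*$ models restriction and $\mb{L}j_!$ models left Kan extension; so the conclusion of Lemma \ref{froj} --- that the base-change map $\mb{L}p'_! \circ \mb{R}g^* \to \mb{R}f^* \circ \mb{L}p_!$ is an isomorphism of functors on homotopy categories --- says precisely that $\theta$ is an equivalence when $\mb{C} = \Top$.

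To pass to an arbitrary presentable $\mb{C}$, I would use the equivalence $\Fun(W, \mb{C}) \simeq \Fun(W, \Top) \otimes \mb{C}$, natural in the simplicial set $W$, the tensor being taken in $\mathbf{Pr}^L$ and $\Fun(W, \Top)$ being the free presentable $\iy$-category on $W^\op$ (see \cite[\S 4.8.1]{HA}). Under these equivalences $p^*$ is identified with $p^*_{\Top} \otimes \id_{\mb{C}}$: both functors preserve colimits and they agree on functors $x \mapsto F(x) \otimes c$ with $F : X \to \Top$ and $c \in \mb{C}$, and such functors generate $\Fun(X, \mb{C})$ under colimits. Passing to left adjoints gives $p_! \simeq p_{!,\Top} \otimes \id_{\mb{C}}$, and the analogous identifications hold for $p'_!$, $g^*$ and $f^*$; since the units and counits of the adjunctions in play are compatible with $- \otimes \id_{\mb{C}}$, the zig-zag defining $\theta$ is identified with $\theta_{\Top} \otimes \id_{\mb{C}}$. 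As $\theta_{\Top}$ is an equivalence and tensoring with $\id_{\mb{C}}$ preserves equivalences, $\theta$ is an equivalence.

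I expect the last paragraph to be the main obstacle: one has to check that \emph{every} constituent of the Beck--Chevalley zig-zag --- not just the functors $p_!, p^*, \dots$ but the unit, the counit, and the identification $g^* p^* \simeq p'^* f^*$ --- is genuinely compatible with $- \otimes \id_{\mb{C}}$, so that $\theta$ is recognized as $\theta_{\Top} \otimes \id_{\mb{C}}$ on the nose and not merely pointwise equivalent to it. A more hands-on alternative, avoiding $\mathbf{Pr}^L$ altogether, is to evaluate $\theta$ at a vertex $x' \in X'$, use the strict pullback square to identify the comma category $Y' \X_{X'} X'_{/x'}$ with $Y \X_X X'_{/x'}$, write both sides of $\theta_{x'}$ as colimits over $Y \X_X X'_{/x'}$ and over $Y \X_X X_{/f(x')}$ of compatibly restricted copies of $k$, and then show the comparison functor between these comma categories is cofinal --- this is exactly where the smoothness of $p$ in the sense of \cite[Definition 4.1.2.9]{HTT} enters. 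That cofinality check looks like the fiddliest step in either route, so I would lean toward the bootstrap from $\Top$.
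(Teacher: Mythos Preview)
Your argument is correct and shares the paper's opening move --- straightening identifies the case $\mb{C} = \Top$ with Lemma \ref{froj} --- but your bootstrap to general presentable $\mb{C}$ is genuinely different. You invoke the tensor decomposition $\Fun(W, \mb{C}) \simeq \Fun(W, \Top) \otimes \mb{C}$ in $\mathbf{Pr}^L$ and argue that the Beck--Chevalley map is $\theta_{\Top} \otimes \id_{\mb{C}}$; the paper instead passes first to presheaf categories $\mb{C} = \Fun(\mb{D}, \Top)$ by observing that the product square $Y' \times \mb{D} \to Y \times \mb{D}$ over $X' \times \mb{D} \to X \times \mb{D}$ again satisfies the hypotheses of Lemma \ref{froj} (the vertical maps are base changes of $p$ and $p'$, hence smooth), and then handles an arbitrary presentable $\mb{C}$ by writing it as a localization of a presheaf category and noting that postcomposing $k$ with a colimit-preserving localization commutes with every term in the base-change map. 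The paper's route is more elementary in that it avoids $\mathbf{Pr}^L$ entirely and sidesteps exactly the compatibility check you flag as the main obstacle; your route is more conceptual and makes the functoriality in $\mb{C}$ transparent, at the cost of needing the $2$-functoriality of $(-) \otimes \id_{\mb{C}}$ to transport units, counits, and the canonical isomorphism $g^* p^* \simeq p'^* f^*$. Your alternative pointwise-cofinality sketch is also viable and is closer in spirit to how one would prove smoothness implies base change directly, but neither the paper nor your main argument takes that road.
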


\begin{proof}
By straightening, the case $\mb{C} = \Top$ is equivalent to the statement of Lemma \ref{froj}. We can immediately extend this to presheaf categories $\mb{C} = \Fun(\mb{D}, \Top)$, since the square
\[\begin{tikzcd}
Y' \X \mb{D} \ar{r}{g} \ar{d}{p'} & Y \X \mb{D} \ar{d}{p} \\
X' \X \mb{D} \ar{r}{f} & X \X \mb{D}
\end{tikzcd}\]
also satisfies the hypotheses of Lemma \ref{froj} and since colimits in $\Fun(\mb{D}, \Top)$ are computed objectwise. Finally, composing $k$ with a localization functor doesn't change anything, and any presentable $\iy$-category is a localization of a presheaf category.
\end{proof}

Now we return to the problem of describing $E \ot X$ for a commutative ring spectrum $E$ and a simplicial set $X$, which we'll take to be finite. The functor $X : \D^\op \to \F$ classifies a left fibration $\td{X} \to \D^\op$ with finite set fibers, and $\td{X}$ is weakly equivalent to $X$. It fits into a pullback square
\[\begin{tikzcd}
\td{X} \ar{r}{g} \ar{d}{p'} & E \F \ar{d}{p} \\
\D^\op \ar{r}{X} & \F
\end{tikzcd}\]
where $E \F \to \F$ is the universal left fibration with finite set fibers, classified by the identity functor on $\F$. An object of $E \F$ is a finite set $S$ together with an element $s \in S$, and a morphism from $(S, s)$ to $(T, t)$ is a set map $f: S \to T$ with $f(s) = t$.

Let $k$ denote the constant functor $E \F \to \CAlg$ valued at $E$. Since $g^*k$ is also a constant functor, we have equivalences
\begin{align*}
E \ot X & \simeq E \ot \td{X} \\
& \simeq \colim{\D^\op} p'_! g^* k \\
& \simeq \colim{\D^\op} X^* p_! k.
\end{align*}
Let $U : \CAlg \to \Sp$ be the forgetful functor, and write $A_E := U \circ p_! k$. Since simplicial realizations of commutative ring spectra are computed on underlying spectra \cite[Corollary 3.2.3.2]{HA}, we have
\[E \ot X \simeq \colim{\D^\op} X^* A_E \tag{$\ast$}.\]
as spectra. This is our topological version of \cite[Definition 2]{GTZ}.

Note that $(\{1\}, 1)$ is an initial object of $E \F$, so that $k$ is left Kan extended from $(\{1\}, 1)$, and therefore $p_! k$ is left Kan extended from $\{1\} \in \F$. It has the property that $A_E(S) = E^{\wedge S}$. Note further that $A_E$ extends to a symmetric monoidal functor
\[A_E^\ot : \F^\amalg \to \Sp^\wedge\]
which is in turn the same data as $E$ itself, since a commutative ring spectrum is by definition a morphism of $\iy$-operads $\F_* \to \Sp^\wedge$ \cite[Definition 2.1.3.1]{HA} and $\F^\amalg$ is the symmetric monoidal envelope of $\F_*$ \cite[Construction 2.2.4.1]{HA}.

What can we do if there's a module in the mix? We'll use the following result, proved in \cite{Gla14a}:
\begin{thm}\label{fliy}
Let $\mb{C}^\ot$ be a symmetric monoidal $\iy$-category. We'll denote the category of finite sets by $\F$ and the category of finite pointed sets by $\F_*$. A datum comprising a commutative algebra $E$ in $\mb{C}$ and a module $M$ over it - that is, an object of $\Mod^{\F_*}(\mb{C})$, the underlying $\iy$-category of Lurie's $\iy$-operad $\Mod^{\F_*}(\mb{C})^\ot$ \cite[Definition 3.3.3.8]{HA} - gives rise functorially to a functor
\[A_{E, M} : \F_* \to \mb{C}\]
such that
\[A_{E, M}(S) \simeq E^{\ot S^o} \ot M.\]
\end{thm}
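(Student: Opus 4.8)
The plan is to exhibit $A_{E,M}$ as the restriction of a symmetric monoidal functor along a fully faithful embedding of $\F_*$, in exact parallel with the way $A_E^\ot : \F^\amalg \to \mb{C}^\ot$ arises from the symmetric monoidal envelope of $\Comm = \F_*$. Let $\mc{CM}$ be the ordinary two-coloured symmetric operad with colours $\{a, m\}$, having one operation $(a, \dots, a) \to a$ for each arity $k \geq 0$, one operation $(a, \dots, a, m) \to m$ for each configuration of inputs containing exactly one $m$ (and any number $k \geq 0$ of $a$'s), and no other operations. A $\mc{CM}$-algebra in a symmetric monoidal $\iy$-category $\mb{C}^\ot$ unwinds, at the level of underlying objects and morphisms, to a commutative algebra $E$ (its value at $a$) together with an $E$-module $M$ (its value at $m$); the content of the first step is to promote this into an equivalence of $\iy$-categories $\mathrm{Alg}_{\mc{CM}}(\mb{C}) \simeq \Mod^{\F_*}(\mb{C})$, matching the naive operad $\mc{CM}$ with Lurie's construction \cite[Definition 3.3.3.8]{HA} — the commutative analogue of the identification of $\mc{LM}$-algebras with pairs (associative algebra, left module).

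Next I would record the symmetric monoidal envelope $\mathrm{Env}(\mc{CM})^\ot$ of \cite[Construction 2.2.4.1]{HA}: since $\mc{CM}$ is an ordinary operad this is (the nerve of) an ordinary symmetric monoidal category whose objects are finite sets labelled by $\{a, m\}$, whose tensor product is disjoint union, and in which a morphism $(I, \ell) \to (J, \ell')$ is a map $\phi : I \to J$ restricting to a bijection between the $m$-labelled elements of $I$ and those of $J$. The full subcategory $\iota : \F_* \inj \mathrm{Env}(\mc{CM})$ spanned by the objects with exactly one $m$-labelled element is then canonically the category of finite pointed sets — the unique $m$ serving as basepoint — and under this identification a pointed map $S \to T$ corresponds precisely to a morphism of $\mathrm{Env}(\mc{CM})$, assembled from the multiplications $a^{\ot k} \to a$ over the non-basepoints of $T$ and the action operation $a^{\ot k} \ot m \to m$ over the basepoint (here $a$ and $m$ denote the one-element objects labelled by the respective colours).

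I would then define $A_{E,M}$ to be the composite
\[\Mod^{\F_*}(\mb{C}) \simeq \mathrm{Alg}_{\mc{CM}}(\mb{C}) \simeq \Fun^\ot(\mathrm{Env}(\mc{CM})^\ot, \mb{C}^\ot) \os{\iota^*} \to \Fun(\F_*, \mb{C}),\]
the middle equivalence being the universal property of the envelope (\cite[\S 2.2.4]{HA}); this is functorial in $(E, M)$ by construction, which is exactly the functoriality claimed. For the value on objects, note that for $S \in \F_*$ one has $\iota(S) \cong a^{\ot S^o} \ot m$ in $\mathrm{Env}(\mc{CM})^\ot$, while the symmetric monoidal functor $\mathrm{Env}(\mc{CM})^\ot \to \mb{C}^\ot$ classifying $(E, M)$ sends $a \mapsto E$ and $m \mapsto M$; being symmetric monoidal it therefore carries $\iota(S)$ to $E^{\ot S^o} \ot M$, as required.

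The main obstacle is the comparison $\mathrm{Alg}_{\mc{CM}}(\mb{C}) \simeq \Mod^{\F_*}(\mb{C})$ in the first step: Lurie's $\Mod^{\F_*}(\mb{C})^\ot$ is not literally defined as the $\iy$-category of algebras over a two-coloured operad, so establishing the equivalence requires unwinding both constructions and checking they produce the same $\iy$-category (not merely the same objects). One could sidestep this by simply \emph{defining} the input data to be a $\mc{CM}$-algebra, deferring the reconciliation with \cite[Definition 3.3.3.8]{HA}; everything past that first step is formal manipulation of envelopes together with the elementary combinatorial description of $\mathrm{Env}(\mc{CM})$ recorded above.
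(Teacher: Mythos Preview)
The paper does not contain a proof of this statement: it is imported from \cite{Gla14a} and merely quoted here, so there is no in-paper argument to compare against. That said, your proposed route is a natural and essentially correct one. The combinatorial description of $\mathrm{Env}(\mc{CM})$ is accurate, the identification of its full subcategory on objects carrying exactly one $m$-label with $\F_*$ is correct, and once the universal property of the envelope is invoked the formula $A_{E,M}(S) \simeq E^{\ot S^o} \ot M$ together with the claimed functoriality in $(E,M)$ drops out immediately.

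You have also correctly isolated the only nontrivial step: the equivalence $\Mod^{\F_*}(\mb{C}) \simeq \mathrm{Alg}_{\mc{CM}}(\mb{C})$, which is not literally how Lurie packages the data in \cite[Definition 3.3.3.8]{HA}. This comparison is of the same flavor as the identification of $\mc{LM}$-algebras with pairs (associative algebra, left module) carried out in \cite[\S 4.2]{HA}, and one expects the commutative analogue to go through with similar arguments; your suggested workaround---taking a $\mc{CM}$-algebra as the input datum and deferring the reconciliation with Lurie's definition---produces a correct statement either way. Whether this matches the approach actually taken in \cite{Gla14a} cannot be determined from the present paper.
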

Let $X$ be a pointed finite simplicial set, thought of as a functor $\D^\op \to \F_*$. By analogy with $(\ast)$, we'll define the tensor product of $E$ with $X$ with coefficients in $M$ as
\[(E \ot X; M) := \colim{\D^\op} X^*A_{E, M}.\]
We expect this construction to agree, under appropriate circumstances, with the factorization homology of the pointed space $X$ regarded as a stratified space \cite{AFT14} with coefficients in a factorization algebra constructed from $E$ and $M$. 

We note also that a cocommutative coalgebra spectrum $P$ defines a functor $C_P : \mc{F}^{op} \to \Sp$, again mapping a finite set $S$ to the $S$-indexed smash power of $P$, but we won't go into the coherency details of this because all such functors arising in the present work can be defined easily at the point-set level.

\begin{exa}
When $X = S^1$ is the usual simplicial model for the circle, with
\[S^1[n] = \{0, 1, \cdots, n\}\]
$E \otimes S^1$ returns the usual simplicial expression for $THH(E)$. If $M$ is an $E$-module, then $(E \otimes S^1 ; M)$ returns the usual simplicial expression for $THH(E; M)$, where we point $S^1$ by the lone $0$-simplex \cite{MSV}.
\end{exa}

\section{The Hodge filtration} \label{Hodge}

We'll now exploit this lemmatic mass to derive a filtration of the topological Hochschild homology spectrum of a commutative ring spectrum $E$. More generally, this filtration exists on $E \ot X$ for an arbitrary simplicial set $X$. In this section, we'll define this filtration, present a geometric model for it, and give a convergence result. 

A key step is the following jugglement of coends:

\begin{prop}\label{naytya}
\begin{align*}
E \otimes X & = \colim{\D^{op}} X^* A_E \\
& \simeq \int^{\D^{op}}  \ul{\bb{S}} \wedge X^* A_E  & (\text{by Proposition }\ref{constend}) \\
& \simeq \int^\mc{F} X_! \ul{\bb{S}} \wedge A_E & (\text{by Proposition } \ref{coendind})
\end{align*}
where 
\begin{align*}X_! \ul{\bb{S}}(S) & \simeq \colim{[n] \in \D^{op}, S \to X[n]} \bb{S} \\
& \simeq \colim{[n] \in \D^{op}} \bb{S}^{\vee X[n]^S} \\
& \simeq \bb{S} \wedge \colim{\D^{op}} X[n]^S \\
& \simeq \bb{S} \wedge X^S \\
& \simeq \Si^\iy_+ X^S
\end{align*}
and the functoriality in $S$ is given by diagonal inclusions and projections; in other words, $X_! \ul{\bb{S}}$ is equivalent to the functor \[C_{\Si^\iy_+ X}: \mc{F}^{op} \to \Sp\]
coming from the coalgebra structure on $\Si^\iy_+ X$.
\end{prop}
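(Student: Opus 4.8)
The plan is to dispatch the displayed computation in two halves. The first three equivalences are bookkeeping: the first is the formula $(\ast)$ of the previous section, the second is Proposition \ref{constend} applied to $F = X^* A_E$ (with $\mathbbm{1} = \ul{\bb{S}}$, the unit of $\Sp$), and the third is Proposition \ref{coendind} applied to $i = X : \D^\op \to \F$, $F = A_E$ and $G = \ul{\bb{S}}$. The real content is the identification of the functor $X_! \ul{\bb{S}} : \F^\op \to \Sp$ with $C_{\Si^\iy_+ X}$, which I would split into an objectwise calculation (the second displayed chain of equivalences) and a comparison of functoriality in $S$.

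For the objectwise part I would unwind the explicit left Kan extension formula of Proposition \ref{coendind}: $X_! \ul{\bb{S}}(S)$ is the colimit of the constant $\bb{S}$-diagram over the comma category $\D^\op \X_\F \F_{S/}$, whose objects are pairs $([n], S \to X[n])$. This category sits over $\D^\op$ with fibre over $[n]$ the set $\Hom_\F(S, X[n]) = X[n]^S$ of all functions, so I would compute the colimit fibrewise to obtain $\colim{[n] \in \D^\op} \bv_{X[n]^S} \bb{S}$; since $\bv_{X[n]^S} \bb{S} = \Si^\iy_+(X[n]^S)$ and $\Si^\iy_+$ preserves colimits, this is $\Si^\iy_+$ of $\colim{[n] \in \D^\op} X[n]^S$, and the colimit over $\D^\op$ of the simplicial set $X[n]^S = (X^S)[n]$ is (the realization of) the $S$-fold power $X^S$ itself. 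Hence $X_! \ul{\bb{S}}(S) \simeq \Si^\iy_+ X^S$, which, $\Si^\iy_+$ being symmetric monoidal, is $(\Si^\iy_+ X)^{\wedge S} = C_{\Si^\iy_+ X}(S)$. One can also phrase this by noting that $\D^\op \X_\F \F_{S/}$ is the opposite of the category of simplices of $X^S$, whose classifying space is $X^S$ by the density/last-vertex principle.

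The step I expect to be the main obstacle is not this calculation but the upgrade to an equivalence of functors $\F^\op \to \Sp$ --- that is, pinning down the functoriality in $S$ and identifying it with the ``diagonal inclusions and projections''. As the paper already flags, this is best handled by observing that all the functors in play have strict point-set models: $S \mapsto X^S$ is a strict functor $\F^\op \to \sSet$ given by precomposition, and composing with a point-set model of $\Si^\iy_+$ yields, by the very definition of the cocommutative coalgebra structure on $\Si^\iy_+ X$ (induced by the diagonal and the collapse maps of $X$), the functor $C_{\Si^\iy_+ X}$. Comparing this with $X_! \ul{\bb{S}}$ then amounts to the strictly natural last-vertex equivalence between the simplicial set $X^S$ and the classifying space of its category of simplices, applied levelwise; the ``diagonals and projections'' description falls out by factoring a map $g : T \to S$ in $\F$ as a surjection followed by an injection, so that $g^* : X^S \to X^T$ is a composite of diagonal maps and coordinate projections, which become comultiplications and counits after applying $\Si^\iy_+$. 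Apart from keeping variances straight, the only genuine homotopy-theoretic input is the naturality of the presentation of a simplicial set by its simplices.
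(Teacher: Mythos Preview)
Your proposal is correct and follows the same line as the paper: the proposition is essentially self-proving, with each step justified inline by the cited propositions and the left Kan extension formula, and your write-up simply fleshes out those steps. If anything, you supply more than the paper does on the functoriality in $S$; the paper merely asserts that the functoriality is ``given by diagonal inclusions and projections'' and that $X_!\ul{\bb{S}} \simeq C_{\Si^\iy_+ X}$, whereas you sketch the naturality argument via the point-set functor $S \mapsto X^S$ and the last-vertex/simplices comparison.
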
 
To lessen wrist fatigue, we'll usually wite $C_X$ for $C_{\Si^\iy_+ X}$.

Similarly, if $X_*$ is a pointed simplicial set, then
\[(E \otimes X_*; M) \cong \int^{\mc{F}_*}  X_{*, !} \ul{\bb{S}} \wedge A_{E, M}\]
where
\[X_{*, !} \ul{\bb{S}}(S) \cong \Si^\iy_+ X_*^{S^o}.\]
This is just \cite[Proposition 4.8]{Kuh04} in a slightly different language.

Now our filtration of $E \ot X$ will come from a filtration of the functor $C_X$.

\begin{dfn}
Let $\F^{\leq n}$ denote the full subcategory of $\F$ spanned by those sets $T$ for which
\[|T| \leq n.\]
Denote by $C^{\leq n}_X$ the restricted-and-extended functor
\[\Ran_{\F^{\leq n, \op}}^{\F^\op} C_X |_{\F^{\leq n, \op}}.\]
The $n$th Hodge-filtered quotient of $E \ot X$ is
\[\mf{H}^{\leq n}(E \ot X) := \int^{\F}  C^{\leq n}_X \wedge A_E.\]
In the same breath, we may as well define
\[\mf{H}^{\geq n + 1}(E \ot X) := \Fib \left[ E \otimes X \to \mf{H}^{\leq n}(E \ot X) \right]\]
and the graded subquotient
\[\mf{H}^{(n)}(E \ot X) := \Fib \left[ \mf{H}^{\leq n}(E \ot X) \to \mf{H}^{\leq n - 1}(E \ot X) \right].\]
Of course, if we define
\[C^{\geq n + 1}_X := \Fib \left[C_X \to C^{\leq n}_X \right]\]
and
\[C^{(n)}_X := \Fib \left[ C^{\leq n}_X \to C^{\leq n - 1}_X \right]\]
then we have the equivalences
\[\mf{H}^{\geq n + 1}(E \ot X) \simeq \int^\F  C^{\geq n + 1}_X \wedge A_E\]
and
\[\mf{H}^{(n)}(E \ot X) \simeq \int^\F  C^{(n)}_X \wedge A_E.\]
\end{dfn}
The story for the tensor product with coefficients is practically identical:
\begin{dfn}
 Let $\F_*^{\leq n}$ denote the full subcategory of $\F_*$ spanned by those objects $T$ for which $|T^o| \leq n$, and define
\[C^{\leq n}_{X_*} := \Ran_{\F_*^{\leq n, \op}}^{\F_*^\op} C_{X_*} |_{\F_*^{\leq n, \op}}.\]
Then the Hodge filtered quotient $\mc{H}^{\leq n}(E \ot X; M)$ is given by
\[\int^{\F_*} C^{\leq n}_{X_*} \wedge A_{E, M}.\]
\end{dfn}

\begin{rem}
Everything here goes through identically if we work in the category of algebras over a fixed commutative ring spectrum $K$, and we get a Hodge filtration of $E \ot_K X$ by $K$-modules.
\end{rem}

When $X = S^1$, we consider this the appropriate spectrum-level analogue of Loday's Hodge filtration (see \cite[4.5.15]{Lod92}.) We'll present evidence for this shortly, but first we'll showcase some of the geometry of this rather abstract-looking filtration. We'll abusively conflate $X$ with its geometric realization.

\begin{prop}\label{bromide}
Let $\F^{\inj}$ be the category of finite sets and injections and define a functor of 1-categories
\[\alpha_n : \F^{\inj, \op} \to \mc{T}op\]
by
\[\alpha_n(U) = \left \{ (x_u) \in X^U \ | \text{ at most }n \text{ coordinates of } (x_u) \text{ are not at the basepoint} \right \}\]
with the obvious projections as functorialities. Then the restriction of $C_X^{\leq n}$ to $\F^{\inj, \op}$ is equivalent to $\Si^\iy_+ \circ \alpha_n$. It's logical to set the convention $\alpha_\iy(U) = X^U$.
\end{prop}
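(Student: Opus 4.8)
The plan is to compute $C_X^{\leq n}(U)$ pointwise from the defining right Kan extension, cut the resulting limit down to a finite cofinal subposet of subsets of $U$, and then split $\Si^\iy_+$ of a finite product into smash powers so that the limit visibly collapses summand by summand; the same collapse, applied to the obvious colimit presentation of $\alpha_n(U)$, gives the same answer.

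First I would fix a basepoint of $X$ --- implicit in the statement --- with respect to which all smash powers below are taken. For a finite set $U$, the pointwise formula for the right Kan extension along $\F^{\leq n,\op}\inj\F^\op$ identifies $C_X^{\leq n}(U)$ with the limit of $T\mapsto C_X(T)=\Si^\iy_+ X^T$ over the opposite of the category $\F^{\leq n}_{/U}$ of finite sets of cardinality $\leq n$ equipped with a map to $U$, with transition maps those of $C_X$. I claim this limit may be computed over the full subcategory spanned by the \emph{injections} $T\inj U$, i.e.\ over the opposite of the poset $P_n(U)$ of subsets $S\inc U$ with $|S|\leq n$: by Quillen's Theorem A in its form for limits (cf.\ \cite[Theorem 4.1.3.1]{HTT}), it suffices to check that for each $(f: T\to U)$ the relevant comma category is weakly contractible, and that category is the poset $\{S : f(T)\inc S\inc U,\ |S|\leq n\}$, which has $f(T)$ as terminal object. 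Hence
\[C_X^{\leq n}(U)\ \simeq\ \llim{S\in P_n(U)^\op}\ \Si^\iy_+ X^S,\]
where for an inclusion $S'\inc S$ of subsets the transition map is $\Si^\iy_+$ of the coordinate projection $X^S\to X^{S'}$; and since $P_n$ is itself functorial in $U\in\F^{\inj,\op}$, this identification is natural over $\F^{\inj,\op}$.

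Next I would use that $\Si^\iy_+$ is symmetric monoidal and that $\Si^\iy_+ X\simeq\bb{S}\vee\Si^\iy X$ canonically, so that distributing the smash product across the wedge gives a decomposition, natural for coordinate projections,
\[\Si^\iy_+ X^S\ \simeq\ \bv_{A\inc S}\Si^\iy\!\bigl(X^{\wedge A}\bigr),\]
with $A=\emp$ contributing $\bb{S}$, under which the projection $\Si^\iy_+ X^S\to\Si^\iy_+ X^{S'}$ carries the $A$-summand identically to the $A$-summand when $A\inc S'$ and to the basepoint otherwise (the augmentation $\Si^\iy_+ X\to\bb{S}$ being zero on $\Si^\iy X$). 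Plugging this into the limit and commuting the finite limit past the finite wedge --- which in spectra is a product --- each $A\inc U$ with $|A|\leq n$ contributes a single copy of $\Si^\iy(X^{\wedge A})$, its index poset $\{S\in P_n(U):S\supseteq A\}^\op$ being contractible and carrying only identity transition maps, while summands indexed by larger sets never appear; hence $C_X^{\leq n}(U)\simeq\bv_{A\inc U,\,|A|\leq n}\Si^\iy(X^{\wedge A})$. On the other hand $\alpha_n(U)$ is the colimit over $P_n(U)$ of the spaces $X^S$ along the basepoint-insertion inclusions (cofibrations, so that colimit is a homotopy colimit), so applying the colimit-preserving functor $\Si^\iy_+$, decomposing again, and running the identical summand-wise collapse gives $\Si^\iy_+\alpha_n(U)\simeq\bv_{A\inc U,\,|A|\leq n}\Si^\iy(X^{\wedge A})$ as well. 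Matching the two computations --- and checking that the maps induced by an injection $U_1\inj U_2$ agree on each summand on the two sides (the identity on a summand that survives the coordinate projection, zero on one that does not) --- yields the asserted equivalence $C_X^{\leq n}|_{\F^{\inj,\op}}\simeq\Si^\iy_+ \circ\alpha_n$; the convention $\alpha_\iy(U)=X^U$ corresponds to the case $C_X^{\leq\iy}=C_X$.

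The step I expect to cost the most care is not any single computation but the bookkeeping required to make everything functorial over $\F^{\inj,\op}$: the right Kan extension is a priori only a functor on $\F^\op$, so one must verify that its restriction to injections reproduces $\Si^\iy_+ \circ\alpha_n$ \emph{together with} its transition maps, not merely objectwise. Once both sides have been rewritten as the same explicit wedge $\bv_A\Si^\iy(X^{\wedge A})$ with the same ``keep-or-kill'' projection maps this is routine; the one genuinely conceptual input is the contractibility of the comma categories $\{S: f(T)\inc S\inc U,\ |S|\leq n\}$, which is what forces the answer to be $\alpha_n$.
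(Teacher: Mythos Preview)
Your proof is correct and takes a genuinely different route from the paper's.

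Both arguments begin with the same cofinality step, replacing the indexing category $\F^{\leq n}_{/U}$ by the poset $\mc{P}_U^{\leq n}$ of subsets of $U$ of size at most $n$; the paper does this by observing that the inclusion has a left adjoint, you by invoking Theorem~A directly. (Your description of the comma category and its initial/terminal object is fine once one keeps track of which side the $\op$ is on.)

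From there the two proofs diverge. The paper proceeds by induction on $|U|$: it proves a lemma that $\alpha_n|_{\mc{P}_U}$ is a \emph{strongly cocartesian} $U$-cube, using an explicit model inside $(CX)^U$, so that after applying $\Si^\iy_+$ the cube is cartesian in spectra; the inductive hypothesis then forces the whole cube to be right Kan extended from $\mc{P}_U^{\leq n}$. You instead exploit the stable splitting $\Si^\iy_+ X \simeq \bb{S}\vee \Si^\iy X$ to decompose $\Si^\iy_+ X^S$ as $\bigvee_{A\subseteq S}\Si^\iy X^{\wedge A}$, then compute the limit over $(\mc{P}_U^{\leq n})^\op$ and the colimit over $\mc{P}_U^{\leq n}$ summand by summand, each relevant index poset being contractible with only identity transitions. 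This is cleaner and avoids the induction entirely; it moreover yields the explicit formula $C_X^{\leq n}(U)\simeq \bigvee_{|A|\leq n}\Si^\iy X^{\wedge A}$, from which the retract statement of the subsequent corollary is immediate. The paper's approach, in turn, isolates the cocartesian-cube property at the space level, which is what it reuses in the proof of Corollary~\ref{nuaya}. Your own remark about functoriality is well taken: the slickest way to package it is to note that the composite $\Si^\iy_+\alpha_n \to C_X \to C_X^{\leq n}$ is a natural transformation of functors on $\F^{\inj,\op}$, and your summand calculation shows it is the identity on each $\Si^\iy X^{\wedge A}$, hence an equivalence.
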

\begin{proof}
First note that for each finite set $U$, the functor
\[\F^{\inj, \leq n}_{/U} \to \F^{\leq n}_{/U}\]
admits a left adjoint, and is thus homotopy cofinal. 

Now let $\mc{P}_U^{\leq n}$ be the poset of subsets of $U$ of cardinality at most $n$, $\mc{P}_U$ the poset of all subsets of $U$, and $\mc{P}'_U$ the poset of proper subsets of  $U$. The inclusion 
\[\mc{P}_U^{\leq n} \to \F^{\inj, \leq n}_{/U}\]
is an equivalence of categories, and $\Si^\iy_+ \circ \alpha_n$ and $C^{\leq n}_X$ clearly have equivalent restriction to $\mc{P}^{\leq n}_U$, so we are reduced to showing that
\[\Si^\iy_+ \alpha_n(U) \to \llim{V \in (\mc{P}_U^{\leq n})^\op} \Si^\iy_+ \alpha_n(V)\]
is an equivalence for every $U$. We work by induction on the cardinality of $U$; for $|U| \leq n$ there is nothing to prove. 
\begin{lem}
${\alpha_n}|_{\mc{P}_U}$ is a strongly cocartesian $U$-cube.
\end{lem}
\begin{proof}
Replace ${\alpha_n}|_{\mc{P}_U}$ with the diagram of cofibrations that takes $V \subseteq U$ to the subspace
\[\beta(V) = \left \{(x_u) \ | \ x_u \in X \text{ for } u \in V \right \} \] of  \[ \left \{ (x_u) \in (CX)^U \ | \text{ at most }n \text{ coordinates of } (x_u) \text{ are not at the basepoint} \right \}\]
where the basepoint on $CX$ is the basepoint on $X$, not the cone point. This is clearly homotopy equivalent to the original diagram, 
\end{proof}
So $V \mapsto \Si^\iy_+ \circ \alpha_n(V)$ is a cocartesian, and therefore cartesian, cube of spectra. But by the induction hypothesis, ${\alpha_n}|_{\mc{P}'_U}$ is right Kan extended from ${\alpha_n}|_{\mc{P}^{\leq n}_U}$, and so the entire cube is right Kan extended from ${\alpha_n}|_{\mc{P}^{\leq n}_U}$. Moreover, if $x \in \beta(V)$, then the subcube of $\mc{P}_U^{V \subseteq}$ spanned by those spaces containing $x$ is a face of $\mc{P}_U^{V \subseteq}$ containing $V$. This implies that the diagram is strongly cocartesian.
\end{proof}
\begin{cor} \label{nuaya}
$C^{\leq n}_X(U)$ is a retract of $C_X(U)$.
\end{cor}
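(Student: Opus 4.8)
The plan is to reduce, via Proposition \ref{bromide}, to a statement about suspension spectra of the spaces $\alpha_n(U)$, and then appeal to the classical stable splitting of a finite product.

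First I would invoke Proposition \ref{bromide}: with its convention $\alpha_\iy(U) = X^U$, the restrictions of $C^{\leq n}_X$ and of $C_X$ to $\F^{\inj, \op}$ are $\Si^\iy_+ \circ \alpha_n$ and $\Si^\iy_+ \circ \alpha_\iy$, compatibly with the evident inclusion $\alpha_n(U) \inj X^U$ of spaces. Evaluating at $U$, it then suffices to exhibit a retraction of the induced map $\Si^\iy_+ \alpha_n(U) \to \Si^\iy_+ X^U$.

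I would build this retraction from the stable splitting of $X^U$. Since $\Si^\iy_+$ is symmetric monoidal and $\Si^\iy_+ X \simeq \bb{S} \vee \Si^\iy X$ (using the basepoint of $X$), distributing the $U$-fold smash power yields a natural equivalence $\Si^\iy_+ X^U \simeq (\Si^\iy_+ X)^{\wedge U} \simeq \bv_{S \inc U} \Si^\iy X^{\wedge S}$, compatible with the maps induced by injections of finite sets. Writing $\alpha_n(U) = \colim{S \inc U, |S| \leq n} X^S$ as the union of its coordinate subproducts and applying $\Si^\iy_+$, each $\Si^\iy_+ X^S \simeq \bv_{T \inc S} \Si^\iy X^{\wedge T}$ includes into $\Si^\iy_+ X^U$ as the evident sub-wedge; since the poset $\{S : T \inc S \inc U,\ |S| \leq n\}$ has least element $T$ when $|T| \leq n$ and is empty otherwise, the colimit identifies $\Si^\iy_+ \alpha_n(U)$ with the sub-wedge $\bv_{|S| \leq n} \Si^\iy X^{\wedge S}$ of $\Si^\iy_+ X^U$, compatibly with $\alpha_n(U) \inj X^U$. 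A sub-wedge of a finite wedge of spectra is a direct summand, so the projection onto it is the desired retraction.

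The main obstacle is the final identification: checking that the space-level ``number of non-basepoint coordinates'' filtration of $X^U$ is carried by the stable splitting onto precisely the naive sub-wedges, with no spurious summands appearing and with the inclusions matching up. Everything else is formal bookkeeping. (I also expect the retraction thus produced to agree with the canonical map $C_X(U) \to C^{\leq n}_X(U)$ built into the definition of $C^{\leq n}_X$ as a right Kan extension, though that is not needed for the statement.)
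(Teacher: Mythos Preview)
Your argument is correct, but it takes a different route from the paper. The paper argues purely via universal properties: since the cube $V \mapsto X^V$ over $\mc{P}_U$ is strongly cartesian, $X^U$ is already the limit of its restriction to $\mc{P}^{\leq n}_U$, and the projections $\alpha_n(U) \to X^V$ assemble into a map $\phi : \alpha_n(U) \to X^U$ (the inclusion). On the other side, Proposition~\ref{bromide} identifies $C^{\leq n}_X(U)$ with the limit of $\Si^\iy_+ X^V$ over $\mc{P}^{\leq n}_U$, so that the canonical projection $\psi : C_X(U) \to C^{\leq n}_X(U)$ is induced by the projections $\Si^\iy_+ X^U \to \Si^\iy_+ X^V$; then $\psi \circ \Si^\iy_+ \phi \simeq \id$ by inspection of the cones. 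Your approach instead invokes the stable splitting $\Si^\iy_+ X^U \simeq \bv_{S \inc U} \Si^\iy X^{\wedge S}$ and identifies $\Si^\iy_+ \alpha_n(U)$ with the sub-wedge indexed by $|S| \leq n$. This is more explicit and yields the full decomposition rather than a single retract, but the paper's argument is shorter and, more importantly, pins down the retraction as the \emph{canonical} map $\psi$. That identification is precisely what is used in the very next corollary, where the connectivity of $\psi$ is deduced from that of $\phi$; so your closing parenthetical (that the retraction you build should agree with the canonical one) is not optional if you want to continue along the paper's path.
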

\begin{proof}
Clearly ${\alpha_\iy}|_{\mc{P}_U}$ is strongly cartesian, and so $X^U$ is the limit of ${\alpha_\iy}|_{\mc{P}^{\leq n}_U} = {\alpha_n}|_{\mc{P}^{\leq n}_U}.$ This gives a map $\phi: \alpha_n(U) \to X^U$. On the other hand, the projection $\psi : C_X(U) \to C_X^{\leq n}(U)$ comes from regarding $\Si^\iy_+ \alpha_n(U)$ as the limit of $(\Si^\iy_+ \alpha_n)|_{\mc{P}^{\leq n}_U}$. Tracing through the universal properties shows that 
\[\psi \circ (\Si^\iy_+ \phi) : C^{\leq n}_X(U) \to C^{\leq n}_X(U)\]
is homotopic to the identity.
\end{proof}
\begin{cor}
Suppose $X$ is connected. Then the projection $\psi : C_X(U) \to C^{\leq n}_X(U)$ is $(n + 1)$-connected (by which we mean that the homotopy fiber of $\psi$ is $n$-connected).
\end{cor}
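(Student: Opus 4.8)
The plan is to translate the statement, via Proposition~\ref{bromide} and Corollary~\ref{nuaya}, into a connectivity estimate for the inclusion $\alpha_n(U) \hookrightarrow X^U$, and then to prove that estimate using the product filtration of $X^U$.

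By Proposition~\ref{bromide} we have $C_X(U) \simeq \Si^\iy_+ X^U$ and $C^{\le n}_X(U) \simeq \Si^\iy_+\alpha_n(U)$, and by Corollary~\ref{nuaya} the projection $\psi$ is split, the section being $\Si^\iy_+$ of the inclusion $\iota : \alpha_n(U) \hookrightarrow X^U$. Since we are working with spectra, the homotopy fibre of a retraction is equivalent to the cofibre of the corresponding section, so
\[\mathrm{hofib}(\psi) \simeq \mathrm{cofib}(\Si^\iy_+\iota) \simeq \Si^\iy\bigl(X^U/\alpha_n(U)\bigr),\]
the reduced suspension spectrum of the quotient space. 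It therefore suffices to show that $\Si^\iy(X^U/\alpha_n(U))$ is $n$-connected.

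To this end, filter $X^U$ by the closed subspaces $\alpha_0(U) \subseteq \alpha_1(U) \subseteq \cdots \subseteq \alpha_m(U) = X^U$, where $m = |U|$; if $m \le n$ then $\alpha_n(U) = X^U$, $\psi$ is an equivalence and there is nothing to prove, so assume $m > n$. Passing to the cofibrant model of these subspaces inside $(CX)^U$ used in the proof of Proposition~\ref{bromide}, the inclusions become cofibrations and one identifies the successive quotients as
\[\alpha_k(U)/\alpha_{k-1}(U) \cong \bigvee_{S \subseteq U,\, |S| = k} X^{\wedge S},\]
a wedge of $k$-fold smash powers of $X$. Since $X$ is connected, $X^{\wedge k}$ is $(k-1)$-connected, hence $\Si^\iy(\alpha_k(U)/\alpha_{k-1}(U))$ is $(k-1)$-connected, and in particular $n$-connected for every $k \ge n+1$. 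Now $\Si^\iy(X^U/\alpha_n(U))$ carries the finite filtration coming from $\alpha_n(U) \subseteq \alpha_{n+1}(U) \subseteq \cdots \subseteq \alpha_m(U)$, whose associated graded is $\bigoplus_{k = n+1}^{m} \Si^\iy(\alpha_k(U)/\alpha_{k-1}(U))$; as a finite extension of $n$-connected spectra it is $n$-connected, as required.

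The only point that requires genuine care is the identification of the subquotients $\alpha_k(U)/\alpha_{k-1}(U)$ together with the passage to a cofibrant model for the filtration, so that these are honest cofibre spaces and the smash powers come out correctly; once that combinatorial bookkeeping is in place, everything else is a formal connectivity count.
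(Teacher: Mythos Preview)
Your argument is correct and follows the same reduction as the paper: both use the retraction from Corollary~\ref{nuaya} to reduce to the connectivity of the inclusion $\alpha_n(U)\hookrightarrow X^U$. The paper then simply picks a CW model for $X$ with a single $0$-cell and notes that $\alpha_n(U)$ is a subcomplex of $X^U$ containing the $n$-skeleton, so the quotient has no cells in dimensions $1,\dots,n$. Your filtration by the $\alpha_k(U)$ and identification of the subquotients as wedges of smash powers $X^{\wedge k}$ is the same observation unpacked cell-by-cell; it is a bit more explicit but yields the same estimate.

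One small quibble: the cofibrant model inside $(CX)^U$ you cite from the proof of Proposition~\ref{bromide} was built for the cube $V\mapsto \alpha_n(V)$ with $n$ fixed and $V\subseteq U$ varying, not for the filtration $k\mapsto \alpha_k(U)$ you need. The cleanest way to make your inclusions $\alpha_{k-1}(U)\subseteq\alpha_k(U)$ into honest cofibrations is precisely the one-$0$-cell CW model the paper uses, under which each $\alpha_k(U)$ is a subcomplex of $X^U$ and your subquotient formula $\alpha_k(U)/\alpha_{k-1}(U)\simeq \bigvee_{|S|=k} X^{\wedge S}$ is immediate.
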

\begin{proof}
By Corollary \ref{nuaya}, it suffices to show that $\phi$ is $n$-connected. But if we model $X$ by a CW-complex with exactly one $0$-cell, which we take to be the basepoint, then $\phi$ is homotopic to the inclusion of the $n$-skeleton of the natural CW structure on $X^U$.
\end{proof}
\begin{cor}
Suppose $R$ is a connective commutative ring spectrum and $M$ is an $R$-module which is $k$-connective for some $k \in \bb{Z}$. Then the projection
\[(E \ot X ; M) \to (E \ot X ; M)^{\leq n}\]
is $(n + 1 + k)$-connected, and so we have convergence:
\[(E \ot X ; M) \simeq \llim{\leftarrow} (E \ot X; M)^{\leq n}.\]
\end{cor}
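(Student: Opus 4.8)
The plan is to read the homotopy fiber of the projection directly off the coend description of the filtration and then estimate its connectivity termwise. Write $(E \ot X; M)^{\geq n+1} := \Fib\left[(E \ot X; M) \to (E \ot X; M)^{\leq n}\right]$ and $C^{\geq n+1}_{X_*} := \Fib\left[C_{X_*} \to C^{\leq n}_{X_*}\right]$, exactly as in the unpointed case. Since $\int^{\F_*}(-) \wedge A_{E, M}$ is a colimit it is exact, so, just as in the displayed equivalences following the definition of $C^{\geq n+1}_X$,
\[(E \ot X; M)^{\geq n+1} \simeq \int^{\F_*} C^{\geq n+1}_{X_*} \wedge A_{E, M}.\]
It therefore suffices to bound the connectivity of this coend, which I would do by showing that each functor being smashed is termwise highly connected and that coends, being colimits (Section \ref{ends}), preserve connective covers.

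Next I would record two connectivity estimates. First, the pointed analogue of the two corollaries immediately preceding this one shows that, with $X$ still assumed connected and for every object $S \in \F_*$, the spectrum $C^{\geq n+1}_{X_*}(S)$ is $n$-connected: modelling $X$ by a CW complex with a single $0$-cell, placed at the basepoint, identifies the comparison map $\alpha_n(S^o) \hookrightarrow X^{S^o}$ with the inclusion of an $n$-skeleton, and Corollary \ref{nuaya} upgrades this to the statement that $C_{X_*}(S) \to C^{\leq n}_{X_*}(S)$ is $(n+1)$-connected. Second, $A_{E, M}(T) \simeq E^{\wedge T^o} \wedge M$, and since $E = R$ is connective the spectrum $E^{\wedge T^o}$ is connective, so $A_{E, M}(T)$ is $k$-connective for every $T$. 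Smashing these, every value $C^{\geq n+1}_{X_*}(S) \wedge A_{E, M}(T)$ of the functor being colimited over $\twa^{\F_*}$ is $(n+1+k)$-connective, i.e.\ $(n+k)$-connected; since the $(n+k)$-connected spectra form a colimit-closed subcategory, $(E \ot X; M)^{\geq n+1}$ is $(n+k)$-connected, which is precisely the assertion that the projection is $(n+1+k)$-connected.

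For the convergence statement, set $F_n := (E \ot X; M)^{\geq n+1}$; these assemble into a tower $\cdots \to F_n \to F_{n-1} \to \cdots$, and since fibers commute with limits,
\[\Fib\left[(E \ot X; M) \to \lim_n (E \ot X; M)^{\leq n}\right] \simeq \lim_n F_n.\]
For each fixed $i$ the group $\pi_i F_n$ vanishes once $n \geq i - k$, so the towers $\{\pi_i F_n\}_n$ and $\{\pi_{i+1} F_n\}_n$ are pro-zero; the $\lim$--$\lim^1$ exact sequence then forces $\pi_i \lim_n F_n = 0$ for all $i$, so $\lim_n F_n \simeq \ast$ and the canonical map $(E \ot X; M) \to \lim_n (E \ot X; M)^{\leq n}$ is an equivalence.

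The only step carrying genuine content is the first connectivity estimate for $C^{\geq n+1}_{X_*}$, and even that is the preceding corollaries transported almost verbatim to the pointed setting; everything else --- exactness of $\int^{\F_*}(-) \wedge A_{E, M}$, connectivity of $A_{E, M}$, colimit-closure of the connected covers, and the $\lim^1$ bookkeeping --- is routine, so I do not anticipate a serious obstacle.
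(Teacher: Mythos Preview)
Your proposal is correct and follows essentially the same route as the paper: the paper's proof is the two-line observation that each projection $C_{X_*}(S) \wedge A_{E,M}(T) \to C^{\leq n}_{X_*}(S) \wedge A_{E,M}(T)$ is $(n+1+k)$-connected and that this is preserved by the colimit defining the coend. Your write-up simply unpacks this, making the connectivity arithmetic explicit and supplying the $\lim$--$\lim^1$ argument for convergence that the paper leaves implicit in the phrase ``and so we have convergence.''
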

\begin{proof}
For each object $f : S \to T$ of $\twa^{\F_*}$, the projection
\[C_X(S) \wedge A_{E, M}(T) \to C_X^{\leq n}(S) \wedge A_{E, M}(T)\]
is $(n + 1 + k)$-connected. This connectivity is preserved by taking the colimit.
\end{proof}
\begin{rem}
We'd have to be born yesterday to expect good convergence behavior from our filtration in nonconnective situations.
\end{rem}

\section{Multiplicative structure}\label{mulstr}

The goal of this technical section is to pour some symmetric monoidality into our theory of coends; in particular, we'll see that taking a coend against a symmetric monoidal functor maps the Day convolution to the tensor product, thus evincing intriguingly Fourier transform-like behavior. This will allow us to deduce that the Hodge filtration on the tensor product of a space with a commutative ring is multiplicative (Corollary \ref{hanbo}).
 
\begin{lem} \label{prongo}
Let $\mb{C}^\ot, \mb{D}^\ot$ and $\mb{E}^\ot$ be symmetric monoidal $\iy$-categories, and let 
\[F : \mb{D}^\ot \to \mb{E}^\ot\]
be a symmetric monoidal functor which preserves colimits in each fiber. Then postcomposition with $F$ determines a symmetric monoidal functor
\[(\circ F) : \Fun(\mb{C}, \mb{D})^\ot \to \Fun(\mb{C}, \mb{E})^\ot.\] 
\end{lem}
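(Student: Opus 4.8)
The plan is to realize $\Fun(\mb{C},\mb{D})^\ot$ and $\Fun(\mb{C},\mb{E})^\ot$ through the construction of the Day convolution symmetric monoidal structure (see e.g. \cite[\S 2.2.6]{HA}) --- which, under the standing hypotheses on $\mb{C}$, $\mb{D}$, $\mb{E}$, is a coCartesian fibration over $\F_*$ with fiber $\Fun(\mb{C},\mb{D})^\ot_{\angs{n}} \simeq \Fun(\mb{C}, \mb{D}^\ot_{\angs{n}})$ over $\angs{n}$ and whose binary tensor product is the left Kan extension $G_1 \ot_{\mathrm{Day}} G_2 \simeq \ot_!\big((c_1,c_2) \mapsto G_1(c_1)\ot G_2(c_2)\big)$ along $\ot\colon\mb{C}\times\mb{C}\to\mb{C}$ --- and then to check that postcomposition with $F$ preserves coCartesian edges. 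Since $F$ is symmetric monoidal it determines a functor $(\circ F)$ over $\F_*$ which acts over $\angs{n}$ as postcomposition $\Fun(\mb{C}, \mb{D}^\ot_{\angs{n}}) \to \Fun(\mb{C}, \mb{E}^\ot_{\angs{n}})$; what remains is to see that this functor over $\F_*$ carries coCartesian edges to coCartesian edges, for then it is a symmetric (strong, not merely lax) monoidal functor.

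Over inert maps of $\F_*$ this is automatic: the coCartesian edges of $\Fun(\mb{C},\mb{D})^\ot$ over an inert map are obtained by postcomposing a functor $\mb{C} \to \mb{D}^\ot_{\angs{n}}$ with a coCartesian pushforward in $\mb{D}^\ot$, and $F$ preserves such pushforwards because it is symmetric monoidal, so $(\circ F)$ is at least lax symmetric monoidal. For the active map $\angs{2}\to\angs{1}$ --- the only remaining generating case, the higher active maps going through verbatim with a larger tensor in the integrand --- the image under $(\circ F)$ of the coCartesian edge over a pair $(G_1, G_2)$ is the lax structure map $(F\circ G_1) \ot_{\mathrm{Day}} (F\circ G_2) \to F\circ(G_1 \ot_{\mathrm{Day}} G_2)$, which on evaluation at $c \in \mb{C}$ is the canonical map
\[\colim{c_1 \ot c_2 \to c} F(G_1(c_1)) \ot F(G_2(c_2)) \;\longrightarrow\; F\Big( \colim{c_1 \ot c_2 \to c} G_1(c_1) \ot G_2(c_2) \Big).\]
This is an equivalence because $F$ preserves colimits in each fiber --- equivalently, $F\colon\mb{D}\to\mb{E}$ preserves all colimits, colimits in $\mb{D}^\ot_{\angs{n}}\simeq\mb{D}^n$ being computed coordinatewise --- so $F$ passes inside the defining colimit, and because $F$ is strong symmetric monoidal, so $F(G_1(c_1)\ot G_2(c_2)) \simeq F(G_1(c_1)) \ot F(G_2(c_2))$ naturally in $(c_1, c_2)$. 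Thus $(\circ F)$ preserves every coCartesian edge and is symmetric monoidal.

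The conceptual content is just this one-line homotopy-commutativity; the real work is the coherence --- confirming that postcomposition with $F$ genuinely assembles into a functor of $\iy$-categories over $\F_*$, and that its effect on coCartesian edges is the comparison map above uniformly in the arity, against whichever explicit model of Day convolution one has adopted. The cleanest way to finesse this is to argue from the universal property of Day convolution rather than from its construction: a colimit-preserving symmetric monoidal $F$ induces a compatible transformation of the mapping $\iy$-categories that $\Fun(\mb{C},\mb{D})^\ot$ and $\Fun(\mb{C},\mb{E})^\ot$ corepresent, and $(\circ F)$ is the symmetric monoidal functor Yoneda then hands back. In either route the only input beyond formalities is the pair of hypotheses on $F$ --- strong monoidality and preservation of fiberwise colimits --- which are precisely what make the displayed comparison maps equivalences.
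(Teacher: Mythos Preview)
Your proposal is correct and follows the same overall strategy as the paper: verify that postcomposition with $F$ preserves coCartesian edges of the Day convolution fibration. The execution differs. Rather than splitting into inert and active cases and invoking the explicit colimit formula for $G_1 \ot_{\mathrm{Day}} G_2$, the paper works uniformly over an arbitrary edge of $\F_*$: pulling back along any $\phi$ in $\F_*$ gives cocartesian fibrations $\mb{A}, \mb{B}, \mb{C}$ over $\D^1$, and a coCartesian edge of $\Fun(\mb{C},\mb{D})^\ot$ over $\phi$ is precisely a functor $\mu : \mb{A} \to \mb{B}$ over $\D^1$ which is a \emph{relative left Kan extension} of its restriction to the fiber over $0$ (this is how the Day convolution is built in \cite{Gla13}). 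The paper then proves the abstract lemma that if $\nu : \mb{B} \to \mb{C}$ preserves cocartesian edges and fiberwise colimits, then $\nu \circ \mu$ is again a relative left Kan extension from the fiber over $0$, using the pointwise colimit characterization of relative Kan extensions from \cite[4.3.1.11]{HTT}. This handles all morphisms of $\F_*$ at once and, because it is phrased directly in the model for $\Fun(\mb{C},\mb{D})^\ot$, sidesteps the coherence bookkeeping you flag in your last paragraph. Your route, by contrast, makes the geometric content---that $F$ commutes with the defining colimit of the convolution product---more visible, at the cost of leaving the assembly into a functor over $\F_*$ somewhat informal.
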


\begin{proof}
Here's what this boils down to. Suppose $\mb{A}, \mb{B}$ and $\mb{C}$ are $\iy$-categories equipped with cocartesian fibrations to $\D^1$, and we're given functors $\mu : \mb{A} \to \mb{B}$ and $\nu : \mb{B} \to \mb{C}$
both compatible with the projections to $\D^1$. Suppose that $\nu$ preserves cocartesian edges and preserves colimits in the fibers, and moreover that $\mu$ is the relative left Kan extension of its restriction to the fiber $\mb{A}_0$ over $\{0\} \in \D^1$, by which we mean that the diagram
\[\begin{tikzcd}
\mb{A}_0 \ar{r}{\mu_0} \ar[hook]{d} & \mb{B} \ar{d}{p} \\
\mb{A} \ar[dotted]{ru}{\mu} \ar{r} & \D^1 
\end{tikzcd}\]
exhibits $\mu$ as a $p$-left Kan extension of $\mu_0$ in the sense of \cite[Definition 4.3.2.2]{HTT}. Then we want to show that the composition $\nu \circ \mu$ is also the relative left Kan extension of its restriction to $\mb{A}_0$.

Let $a$ be an object of $\mb{A}_1$, and denote 
\[(\mb{A}_0)_{/a} := \mb{A}_0 \X_\mb{A} \mb{A}_{/a}.\]
Denote by $\mu_a$ the natural map from $(\mb{A}_0)_{/a}$ to $\mb{B}$, and let 
\[\ol{\mu_a} :  (\mb{A}_0)_{/a} \X \D^1 \to \mb{B}\]
be an extension of $\mu_a$ to a morphism of cocartesian fibrations over $\D^1$. Taking fibers over $\{1\}$ gives a functor $\mu_a^1 : (\mb{A}_0)_{/a} \to \mb{B}_1$, and it follows from the proof of \cite[Corollary 4.3.1.11]{HTT} (see also \cite[Lemma 2.4]{Gla13}) that the condition on $\mu$ is equivalent to the condition that $\mu(a)$ is a colimit of $\mu_a^1$ for each $a \in \mb{A}_1$. But the condition on $\nu$ guarantees that $\nu \circ \mu$ satisfies these conditions if $\mu$ does.
\end{proof}
Let $I^\ot$ be a small symmetric monoidal $\iy$-category. Suppose we have a symmetric monoidal enrichment $(I^\op)^\ot$ of $I^\op$ and a symmetric monoidal enrichment $(\twa^I)^\ot$ of $\twa^I$ such that $\mc{H}^I$ extends to a symmetric monoidal functor
\[(\mc{H}^I)^\ot : (\twa^I)^\ot \to (I^\op)^\ot \X_{\F_*} I^\ot;\]
such enrichments are constructed in \cite{BGN}, but alternatively, if $I$ comes from is a symmetric monoidal 1-category, these objects are easily constructed as 1-categories, and all of the examples in the present work will be of this form.

In \cite[\S 2.3]{Lur14}, Lurie discusses a presentable stable symmetric monoidal $\iy$-category $\text{Rep}(I)^\ot$ together with a symmetric monoidal stable Yoneda embedding $I^\ot \to \text{Rep}(I)^\ot$ which induces an equivalence of categories
\[\Fun^{\ot, L} (\text{Rep}(I)^\ot, \mb{M}^\ot) \toe \Fun^\ot(I^\ot, \mb{M}^\ot),\]
where $\mb{M}^\ot$ is any stable presentable symmetric monoidal $\iy$-category and $\Fun^{\ot, L}$ is the category of symmetric monoidal functors which preserve colimits in the fibers.
\begin{prop}
The Day convolution category $\Fun(I^\op, \Sp)^\ot$ of \cite{Gla13} is equivalent as a symmetric monoidal $\iy$-category to the category $\text{Rep}(I)^\ot$. 
\end{prop}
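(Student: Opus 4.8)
The plan is to characterize both symmetric monoidal $\iy$-categories by the same universal property, that of the free presentable \emph{stable} symmetric monoidal $\iy$-category generated by $I^\ot$, and then to conclude by uniqueness of corepresenting objects. By construction (following \cite[\S 2.3]{Lur14}, as recalled just above), $\text{Rep}(I)^\ot$ together with its stable Yoneda embedding $\eta : I^\ot \to \text{Rep}(I)^\ot$ corepresents the functor $\mb{M}^\ot \mapsto \Fun^\ot(I^\ot, \mb{M}^\ot)$ on stable presentable symmetric monoidal $\iy$-categories, via restriction along $\eta$. On the other side, $\Fun(I^\op, \Sp)^\ot$ carries a canonical symmetric monoidal functor $\eta_\Sp : I^\ot \to \Fun(I^\op, \Sp)^\ot$ from $I^\ot$, namely the composite of the symmetric monoidal space-level Yoneda embedding $I^\ot \to \Fun(I^\op, \Top)^\ot$ with the symmetric monoidal colimit-preserving objectwise stabilization $\Si^\iy_+ : \Fun(I^\op, \Top)^\ot \to \Fun(I^\op, \Sp)^\ot$. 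It suffices to show that $(\Fun(I^\op, \Sp)^\ot, \eta_\Sp)$ corepresents the same functor.

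For this I would string together three standard facts. (i) The Day convolution category $\mc{P}(I)^\ot := \Fun(I^\op, \Top)^\ot$ of \cite{Gla13} is the free presentable symmetric monoidal $\iy$-category on $I^\ot$, i.e.\ restriction along the Yoneda embedding gives $\Fun^{\ot, L}(\mc{P}(I)^\ot, \mc{E}^\ot) \simeq \Fun^\ot(I^\ot, \mc{E}^\ot)$ for every presentable symmetric monoidal $\mc{E}^\ot$ (see also \cite[\S 4.8]{HA}). (ii) Day convolution commutes with extension of coefficients: for presentable symmetric monoidal $\mc{E}^\ot$ there is a natural equivalence $\mc{P}(I)^\ot \ot \mc{E}^\ot \simeq \Fun(I^\op, \mc{E})^\ot$, where the left-hand tensor product is that of presentable symmetric monoidal $\iy$-categories and the right-hand side carries Day convolution; with $\mc{E} = \Sp$ this reads $\Fun(I^\op, \Sp)^\ot \simeq \mc{P}(I)^\ot \ot \Sp^\ot$. (iii) $\Sp^\ot$ is an idempotent $\bb{E}_\iy$-algebra in $\mb{Pr}^L$ whose modules are precisely the stable presentable symmetric monoidal $\iy$-categories, so that $(-) \ot \Sp^\ot$ is left adjoint to the forgetful functor from stable presentable symmetric monoidal $\iy$-categories to all of them. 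Together these give, naturally in a stable presentable symmetric monoidal $\mb{M}^\ot$,
\[\Fun^{\ot, L}\big(\Fun(I^\op, \Sp)^\ot, \mb{M}^\ot\big) \simeq \Fun^{\ot, L}\big(\mc{P}(I)^\ot, \mb{M}^\ot\big) \simeq \Fun^\ot(I^\ot, \mb{M}^\ot),\]
and unwinding shows the composite is restriction along $\eta_\Sp$. Thus $\Fun(I^\op, \Sp)^\ot$ corepresents the same functor as $\text{Rep}(I)^\ot$, so there is a canonical equivalence $\text{Rep}(I)^\ot \simeq \Fun(I^\op, \Sp)^\ot$ carrying $\eta$ to $\eta_\Sp$.

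The argument is entirely formal once the inputs are in place, so the only real work is bookkeeping: keeping the opposite-category conventions straight (Day convolution on $\Fun(I^\op, \Sp)$ uses the symmetric monoidal structure $(I^\op)^\ot$ on $I^\op$) and checking that the various Yoneda embeddings — Lurie's $\eta$, the space-level one, and $\eta_\Sp$ — are genuinely matched under the equivalences above. The one input that is not pure formalism is (ii), the compatibility of Day convolution with $(-) \ot \Sp^\ot$; if one wishes to avoid quoting it, one can instead take the colimit-preserving symmetric monoidal functor $\Phi : \text{Rep}(I)^\ot \to \Fun(I^\op, \Sp)^\ot$ supplied by the universal property of $\text{Rep}(I)^\ot$ and show $\Phi$ is an equivalence directly, using that both sides are generated under colimits by (compact) representables, that $\Phi$ matches these up, and that $\ul{\Map}_{\text{Rep}(I)}(\eta(i), \eta(j)) \simeq \Si^\iy_+ \Map_I(i, j) \simeq \ul{\Map}_{\Fun(I^\op, \Sp)}(\eta_\Sp(i), \eta_\Sp(j))$.
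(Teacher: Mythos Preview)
Your argument is correct, and it arrives at the same conclusion by a genuinely different route from the paper's.

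The paper does not string together the universal property of $\mc{P}(I)^\ot$ with the idempotence of $\Sp$ in $\mb{Pr}^L$. Instead it invokes \cite[Remark 2.3.10]{Lur14}, which gives a short list of axioms characterizing $\text{Rep}(I)^\ot$ up to symmetric monoidal equivalence, and then checks those axioms for the Day convolution category directly. One axiom is handled by \cite[Lemma 2.11]{Gla13}; the other uses \cite[Proposition 2.12]{Gla13} together with the paper's Lemma~\ref{prongo} (postcomposition with a colimit-preserving symmetric monoidal functor is symmetric monoidal for Day convolution), applied to $\Si^\iy_+$. So the paper's proof is more internal: it leans on the Day-convolution machinery from \cite{Gla13} and the lemma just proved, rather than on structural facts about $\mb{Pr}^L$.

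Your approach has the advantage of being conceptually clean and of explaining \emph{why} the equivalence holds: $\Fun(I^\op, \Sp)^\ot$ is visibly the stabilization of the free presentable symmetric monoidal $\iy$-category on $I^\ot$. The cost is that you must import (ii), the compatibility of Day convolution with $(-) \ot \Sp^\ot$, which is not entirely trivial; you are right to flag it, and your suggested fallback (build the comparison functor from the universal property and check it on representables) is a reasonable way to discharge it. Note also that your use of a symmetric monoidal $\Si^\iy_+ : \Fun(I^\op, \Top)^\ot \to \Fun(I^\op, \Sp)^\ot$ is essentially the content of the paper's Lemma~\ref{prongo}, so the two proofs share that ingredient even though they diverge afterward.
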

\begin{proof}
Both categories certainly have underlying category $\Fun(I^\op, \Sp)$. By \cite[Remark 2.3.10]{Lur14}, we need only verify that $\Fun(I^\op, \Sp)^\ot$ satisfies the axioms characterizing $\text{Rep}(I)^\ot$. The first follows from \cite[Lemma 2.11]{Gla13}, and the second follows from \cite[Proposition 2.12]{Gla13} together with Lemma \ref{prongo}, since the suspension spectrum functor is symmetric monoidal and preserves colimits.
\end{proof}

Now let $F : I \to \Sp$ be a functor. Of course, $F$ factors essentially uniquely as

\[I \inj \Fun(I^\op, \Sp) \os{\td{F}} \to \Sp \]
where $\td{F}$ preserves colimits. We observe that $\td{F}$ is homotopic to the composite

\begin{align*}
\sigma: \Fun(I^\op, \Sp) & \os{\id \X F} \longrightarrow  \Fun(I^\op, \Sp) \X \Fun(I, \Sp) \\
& \longrightarrow \Fun(I^\op \X I, \Sp \X \Sp) \\
& \os{\wedge \circ (-) \circ \mc{H}^I} \longrightarrow \Fun(\twa^I, \Sp) \\
& \os{\text{colim}} \longrightarrow \Sp. 
\end{align*}
since $\sigma$ preserves colimits, and by the behavior of coends of representable functors, restricts to $F$ on $I$.

Suppose that $F$ extends to a symmetric monoidal functor $F^\ot : I^\ot \to \Sp^\wedge$. Then $\td{F}$ extends essentially uniquely to a symmetric monoidal functor $\td{F}^\ot : \Fun(I^\op, \Sp)^\ot \to \Sp^\wedge$ compatible with $F^\ot$. Thus coends against a symmetric monoidal functor $F$ behave like Fourier transforms: they interchange the Day convolution and the tensor product.

Now we'll specialize to the case where $I$ is the category $\F$ of finite sets. We'd like to give a symmetric monoidal enrichment of Proposition \ref{naytya}, which we'll build by composing several functors.

First, by \cite[Corollary 2.4.3.10]{HA}, we have an equivalence 
\[a : \Fun^\ot((\F^\op)^\amalg, \Top^\X) \toe \Top\].
Next, composition with the symmetric monoidal functor $(\Si^\iy_+)^\ot$ induces a functor 
\[s : \Fun^\ot((\F^\op)^\amalg, \Top^\X) \to \Fun^\ot((\F^\op)^\amalg, \Sp^\wedge).\]
We claim that $s$ is colimit-preserving; indeed, it suffices to show that $s$ preserves sifted colimits and coproducts. But $s$ preserves sifted colimits because these are computed objectwise in the target symmetric monoidal category, and it preserves coproducts because these are given by the tensor product and $\Si^\iy_+$ is a symmetric monoidal functor. (This argument will recur a couple of times in the next few paragraphs.)

Now let $\Alg_{\F^\op} (\Sp)$ be the category of \emph{lax} symmetric monoidal functors from $(\F^\op)^\amalg$ to $\Sp^\wedge$. We have an obvious full and faithful inclusion
\[l : \Fun^\ot((\F^\op)^\amalg, \Sp^\wedge) \to \Alg_{\F^\op}(\Sp).\]

\begin{lem}
$l$ preserves colimits.
\end{lem}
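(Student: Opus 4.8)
The claim is that the inclusion $l : \Fun^\ot((\F^\op)^\amalg, \Sp^\wedge) \to \Alg_{\F^\op}(\Sp)$ of strong symmetric monoidal functors into lax symmetric monoidal functors preserves colimits. My overall strategy is the same two-pronged approach the paper has been using for the functor $s$ just above: reduce the preservation of all colimits to the preservation of (i) sifted colimits and (ii) coproducts, and handle each separately.

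First I would note that both source and target are presentable — $\Alg_{\F^\op}(\Sp)$ is presentable by \cite[Corollary 3.2.3.5]{HA} (algebras over an $\iy$-operad in a presentable symmetric monoidal category), and $\Fun^\ot((\F^\op)^\amalg,\Sp^\wedge) \simeq \Sp$ by the envelope equivalence \cite[Corollary 2.4.3.10]{HA} used above for $a$ — so it suffices to check preservation of sifted colimits and of finite coproducts. For sifted colimits, the key point is that colimits of algebras over an $\iy$-operad that are indexed by sifted diagrams are computed in the underlying category \cite[Proposition 3.2.3.1]{HA}; this holds both in $\Alg_{\F^\op}(\Sp)$ (lax functors) and, objectwise over each $\langle S\rangle \in (\F^\op)^\amalg$, among the strong monoidal functors, since a sifted colimit of functors each of which is strong monoidal is again strong monoidal (the comparison maps $F(X)\ot F(Y)\to F(X\ot Y)$ pass to the colimit because $\ot$ on $\Sp$ preserves sifted colimits in each variable, hence preserves the relevant geometric realization/filtered colimit). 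So $l$ commutes with sifted colimits because on both sides they are computed objectwise in $\Sp$.

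For coproducts, the standard fact is that the coproduct of (strong, or lax) symmetric monoidal functors into a symmetric monoidal category is computed by the tensor product: the initial object is the constant functor at the unit, and the coproduct of $F$ and $G$ is $\langle S\rangle \mapsto F(S)\ot G(S)$ with the evident monoidal structure — this is precisely the statement that $\CAlg$ of a symmetric monoidal $\iy$-category has coproducts given by $\ot$, applied pointwise, together with \cite[Corollary 3.2.4.7]{HA} or the discussion of \cite[\S 3.2.4]{HA}. Since the tensor product formula for the coproduct is the same whether we form it in strong monoidal functors or in lax monoidal functors — in both cases it is characterized by the same universal property and realized by the same objectwise-$\ot$ formula — $l$ carries the coproduct in its source to the coproduct in its target. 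Combining this with the sifted-colimit case, and invoking that any colimit is built from coproducts and sifted (in fact, reflexive coequalizer / geometric realization) colimits, yields that $l$ preserves all colimits.

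**Main obstacle.** The one point that needs genuine care, rather than citation, is the claim that a sifted colimit of strong (not merely lax) symmetric monoidal functors, computed objectwise, is again strong symmetric monoidal — equivalently, that the full subcategory $\Fun^\ot \subseteq \Alg_{\F^\op}(\Sp)$ is closed under sifted colimits. This comes down to knowing that for a sifted diagram $F_\bullet$, the natural map $\bigl(\colim F_\bullet\bigr)(X)\ot\bigl(\colim F_\bullet\bigr)(Y)\to\bigl(\colim F_\bullet\bigr)(X\ot Y)$ is an equivalence, which follows from each $F_i(X)\ot F_i(Y)\toe F_i(X\ot Y)$ together with the fact that $-\ot-:\Sp\times\Sp\to\Sp$ preserves sifted colimits separately in each variable (so preserves the sifted colimit of the diagram $(i,j)\mapsto F_i(X)\ot F_j(Y)$, and the diagonal is cofinal in a sifted index category). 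I would spell this out in a sentence, as it is the only non-formal ingredient.
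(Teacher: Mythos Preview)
Your overall two-step strategy (sifted colimits, then coproducts) is the same as the paper's, and your treatment of sifted colimits is fine—indeed more explicit than the paper's one-line dismissal.

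The gap is in your handling of coproducts. You assert as a ``standard fact'' that the coproduct of lax symmetric monoidal functors $I\to\mb{C}$ is always the objectwise tensor $S\mapsto F(S)\ot G(S)$, citing the fact that coproducts in $\CAlg$ are tensor products. This is false in general: identifying $\Alg_I(\mb{C})$ with $\CAlg(\Fun(I,\mb{C})^\ot_{\mathrm{Day}})$, the coproduct is given by \emph{Day convolution}, not pointwise tensor. For instance, if $I=(\bb{N},+)$, lax monoidal functors are graded ring spectra and their coproduct has $n$th piece $\bigvee_{i+j=n} F_i\wedge G_j$, not $F_n\wedge G_n$. What rescues you here is the special feature you never invoke: $(\F^\op)^\amalg$ is \emph{cocartesian}, and for a cocartesian source the Day convolution indexing category $(I\times I)\times_I I_{/U}\simeq I_{/U}\times I_{/U}$ has a terminal object $(U,U,\nabla)$, so Day convolution collapses to pointwise tensor. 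Equivalently, one has $\Alg_{(\F^\op)^\amalg}(\Sp)\simeq\Fun(\F^\op,\CAlg(\Sp))$, under which coproducts really are pointwise. Once you supply this, you still owe the easy verification that the pointwise tensor of two strong monoidal functors is strong monoidal (so that the coproduct in the target lies in the image of the fully faithful $l$); you gesture at this but do not state it.

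By contrast, the paper takes the Day convolution description at face value and proves directly, via a model-categorical argument in marked simplicial sets over $\mf{P}'_{\Comm}$, that the Day convolution of two strict symmetric monoidal functors is again strict. Your route—once patched with the cocartesian observation—is genuinely more elementary and avoids that machinery, but as written it rests on an assertion that is false without that hypothesis.
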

\begin{proof}
Once again, it's immediate that $l$ preserves sifted colimits. Now by \newline \cite{Gla13}[Proposition 2.10], we can identify $\Alg_{\F^\op}(\Sp)$ with the category 
\[\CAlg(\Fun(\F^\op, \Sp))\]
 of commutative algebras in the Day convolution symmetric monoidal category $\Fun(\F^\op, \Sp)^\ot$. Thus the coproduct in $\Alg_{\F^\op}(\Sp)$ is given by Day convolution, and what we want to show is that 
\begin{lem}
The Day convolution of two strict symmetric monoidal functors is canonically strict symmetric monoidal.
\end{lem}

Let's use the notation of the proof of \cite[Proposition 3.2.4.3]{HA}. We'll work in the model category $(\Set^+_\D)_{/\mf{P}'_{\Comm}}$, in which the fibrant objects are exactly symmetric monoidal categories with their cocartesian edges marked.

Let $\mu : \angs{2} \to \angs{1}$ be the active map. We regard $(\D^1)^\sharp \X (\F_*)^\sharp$ as a marked simplicial set over $(\F_*)^\sharp$ via the composite
\[(\D^1)^\sharp \X (\F_*)^\sharp \os{\mu \X \id} \to (\F_*)^\sharp \X (\F_*)^\sharp \os{\wedge} \to (\F_*)^\sharp.\]
Unwinding the definitions, we find that giving a pair of strict symmetric monoidal functors $\mb{C}^\ot \to \mb{D}^\ot$ together with a strict symmetric monoidal structure on their Day convolution is the same as giving a functor
\[((\D^1)^\sharp \X (\F_*)^\sharp) \X_{(\F_*)^\sharp} (\mb{C}^\ot)^\natural \to (\mb{D}^\ot)^\natural\]
of marked simplicial sets over $(\F_*)^\sharp$. Thus it suffices to show that the inclusion
\[i : (\{0\} \X (\F_*)^\sharp) \X_{(\F_*)^\sharp} (\mb{C}^\ot)^\natural \inj ((\D^1)^\sharp \X (\F_*)^\sharp) \X_{(\F_*)^\sharp} (\mb{C}^\ot)^\natural\]
is a trivial cofibration. But recall from the proof of \cite[Proposition 3.2.4.3]{HA} that there is a left Quillen bifunctor
\[\nu: (\Set^+_\D)_{/\mf{P}'_{\Comm}} \X (\Set^+_\D)_{/\mf{P}'_{\Comm}} \to (\Set^+_\D)_{/\mf{P}'_{\Comm}}\]
which takes a pair $(X, Y)$ to the product $X \X Y$ regarded as a marked simplicial set over $\F_*$ by composition with $\wedge$. Now the conclusion follows from the fact that $i$ arises as the product, under $\nu$, of $(\mb{C}^\ot)^\natural$ with the $\mf{P}'_\Comm$-anodyne inclusion 
\[\begin{tikzcd}[column sep=0]
\{\angs{2}\} \ar[hook]{rr} \ar{rd}  && (\D^1)^\sharp \ar{ld}{\mu} \\
& \F_*
\end{tikzcd}\]
\end{proof}
Finally, the functor $\td{F}^\ot$ induces a functor on commutative algebras, which we abusively denote
\[\td{F}^\ot : \Alg_{\F^\op}(\Sp) \to \CAlg(\Sp).\]
$\td{F}^\ot$ preserves colimits: it preserves sifted colimits, once again, because these are evaluated in $\Sp$, and it preserves coproducts because it is symmetric monoidal.

Consider the composite 

\[F_@ := \td{F}^\ot \circ l \circ s \circ a^{-1} : \Top \to \CAlg(\Sp).\]
$F_@$ is given by the formula
\[F_@(X) = \int^{\F} C_X \wedge F\]
and it extends uniquely to a symmetric monoidal functor from $\Top^\amalg$ to $\CAlg(\Sp)^\wedge$, since the symmetric monoidal structure is given by the coproduct on each side.

Observe that any $F$ is equivalent as a symmetric monoidal functor to $A_R$ for some commutative ring spectrum $R$. By the characterization of colimit-preserving functors from $\Top$ into a presentable $\iy$-category, we have proven the following:
\begin{thm}\label{any}
There is an equivalence of symmetric monoidal functors $\Top \to \CAlg(\Sp)$
\[\int^\F C_{(-)} \wedge A_R \simeq  R \ot (-).\]
\end{thm}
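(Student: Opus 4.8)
The plan is to recognize Theorem \ref{any} as the assertion that the functor $F_@ = \td{F}^\ot \circ l \circ s \circ a^{-1}$, constructed in this section for $F = A_R$, coincides with $R \ot (-)$ as a symmetric monoidal functor $\Top \to \CAlg(\Sp)$. The key observation is that both are colimit-preserving symmetric monoidal functors out of $\Top^\amalg$ (the symmetric monoidal structure on the source being the cartesian/coproduct structure, which on $\Top$ is the disjoint union), and such functors are rigidly controlled by their value on the point. So the proof reduces to two things: (i) identifying $F_@$ as a colimit-preserving symmetric monoidal functor, and (ii) checking it agrees with $R \ot (-)$ on the one-point space.

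First I would assemble (i) from the work already done in the section: each of $a^{-1}$, $s$, $l$, and $\td{F}^\ot$ has been shown (or is immediate) to be symmetric monoidal and colimit-preserving, so the composite $F_@$ is too; and the formula $F_@(X) \simeq \int^\F C_X \wedge A_R$ is exactly the content of Proposition \ref{naytya} read through the chain of equivalences, with $C_X = X_! \ul{\bb{S}}$. Here I would invoke the universal property of $\Top$ as the free presentable $\iy$-category on a point — more precisely the symmetric monoidal refinement, that $\Fun^{\ot, L}(\Top^\amalg, \mb{M}^\ot) \simeq \mb{M}^\ot$ evaluated at the unit for any presentable symmetric monoidal $\mb{M}^\ot$ — applied with $\mb{M}^\ot = \CAlg(\Sp)$ equipped with the coproduct symmetric monoidal structure (for which the unit is the initial object $\bb{S}$). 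This reduces the theorem to an equivalence of commutative ring spectra $F_@(\ast) \simeq R \ot \ast \simeq R$, natural in $R$.

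Then I would verify (ii) directly: for $X = \ast$ the functor $C_X = C_{\Si^\iy_+ \ast} = C_{\bb{S}}$ is the constant functor at $\bb{S}$ (since $\ast^S = \ast$ for every finite $S$), so by Proposition \ref{constend}, $\int^\F \ul{\bb{S}} \wedge A_R \simeq \colim{\F} A_R$, and since $\{1\} \in \F$ is initial and $A_R$ is left Kan extended from $\{1\}$ with $A_R(\{1\}) = R$, this colimit is just $R$. One should check this identification is compatible with the ring structures — i.e. that the symmetric monoidal structure of $F_@$ recovers the commutative ring structure on $R$ — but this is forced, because under the equivalence $\Fun^\ot(\F^\amalg, \Sp^\wedge) \simeq \CAlg(\Sp)$ recalled in Section \ref{tensor} the datum $A_R^\ot$ literally \emph{is} the commutative ring spectrum $R$, and the coalgebra functor $C_\ast$ is the trivial one, so the coend pairing returns $A_R^\ot$ evaluated at the unit.

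The main obstacle is bookkeeping rather than mathematics: making sure that the symmetric monoidal structure carried along the composite $\td{F}^\ot \circ l \circ s \circ a^{-1}$ is genuinely the coproduct structure on $\CAlg(\Sp)$ (so that the universal property of $\Top^\amalg$ applies in the symmetric monoidal form), and that naturality in $R$ is maintained at each stage — in particular that the equivalence $F = A_R^\ot$ used to pass from an arbitrary symmetric monoidal $F : \F^\amalg \to \Sp^\wedge$ to a commutative ring spectrum is functorial. Both of these are guaranteed by the cited results (\cite[Corollary 2.4.3.10]{HA}, \cite[\S 2.3]{Lur14}, and the lemmas of this section), so once they are lined up the theorem follows formally from the universal property of $\Top$.
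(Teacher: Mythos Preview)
Your proposal is correct and follows the same approach as the paper, whose proof is essentially the discussion immediately preceding the theorem statement: assemble $F_@$ from the colimit-preserving symmetric monoidal pieces $a^{-1}, s, l, \td{F}^\ot$ and then invoke the universal property of $\Top$. One small slip in step (ii): $\{1\}$ is \emph{terminal} in $\F$, not initial---but terminality is precisely what gives $\colim{\F} A_R \simeq A_R(\{1\}) = R$, and in any case your left Kan extension argument already suffices on its own.
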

\begin{cor} \label{hanbo}
The Hodge filtration on $R \ot X$ is compatible with the multiplication in the sense that, in the diagram
\[\begin{tikzcd}
(R \ot X)^{\geq n} \wedge (R \ot X)^{\geq m} \ar[dotted]{r} \ar{d} & (R \ot X)^{\geq m + n} \ar{d} \\
(R \ot X) \wedge (R \ot X) \ar{r} & R \ot X,
\end{tikzcd}\]
the dotted arrow exists functorially in $R$.
\end{cor}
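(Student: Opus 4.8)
The plan is to reduce the statement to a multiplicativity assertion about the filtration $C^{\geq \bullet}_{(-)}$ on the functor $C_{(-)} : \Top^\amalg \to \Fun(\F^\op, \Sp)^\wedge$, and then transport it across the symmetric monoidal ``Fourier transform'' $\tilde A_R^\ot$ of Section \ref{mulstr}. Concretely, by Theorem \ref{any} the multiplication on $R \ot X$ is obtained by applying the symmetric monoidal functor $F_@ = \tilde A_R^\ot \circ l \circ s \circ a^{-1}$ to the codiagonal $X \amalg X \to X$ in $\Top^\amalg$; since $F_@$ is symmetric monoidal, it carries the coproduct in $\Top$ (hence the external product $X \boxtimes X$ in $\Top^\amalg$) to the tensor product $\wedge$ in $\CAlg(\Sp)$. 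So it suffices to produce, functorially in $R$ and $X$, a map of filtered objects realizing
\[\int^\F C^{\geq n}_X \wedge A_R \ \wedge\ \int^\F C^{\geq m}_X \wedge A_R \ \longrightarrow\ \int^\F C^{\geq m+n}_X \wedge A_R\]
compatibly with the unfiltered multiplication. Because coends against the symmetric monoidal functor $A_R$ interchange Day convolution and $\wedge$ (the displayed computation in Section \ref{mulstr}, i.e.\ $\tilde A_R^\ot$ being monoidal), the left-hand side is $\int^\F (C^{\geq n}_X \ast C^{\geq m}_X) \wedge A_R$, where $\ast$ is Day convolution on $\Fun(\F^\op, \Sp)$. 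Thus everything comes down to constructing a map of functors $\F^\op \to \Sp$
\[C^{\geq n}_X \ast C^{\geq m}_X \ \longrightarrow\ C^{\geq m+n}_X,\]
again functorial in $X$ and compatible with the coalgebra comultiplication $C_X \ast C_X \to C_X$ (which is what the multiplication on $R\ot X$ comes from on the coend side, $C_X$ being the coalgebra $\Sigma^\infty_+ X$).

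The next step is to build this filtered comultiplication. The comultiplication on $C_X = C_{\Sigma^\infty_+ X}$ is induced by the diagonal $X \to X \times X$, or combinatorially, at the level of the functors $\F^\op \to \Sp$, by the fold maps $S \amalg T \to $ along which Day convolution is computed: $(C_X \ast C_X)(U) = \colim_{S \amalg T \to U} C_X(S) \wedge C_X(T)$ and the map to $C_X(U)$ uses $C_X(S)\wedge C_X(T) \simeq C_X(S \amalg T) \to C_X(U)$ via functoriality of $C_X$ in the injection $U \hookrightarrow$? — more precisely via the coalgebra structure. The key geometric input is Proposition \ref{bromide}: restricted to $\F^{\inj,\op}$, $C^{\geq n}_X$ is (the suspension spectrum of) the pair $(\alpha_\infty(U), \alpha_{n-1}(U))$, i.e.\ $X^U$ relative to the locus where at most $n-1$ coordinates are off the basepoint. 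Under the identification $X^{S \amalg T} = X^S \times X^T$, a point of $X^{S\amalg T}$ with $<m+n$ coordinates off the basepoint has either $<n$ of its $S$-coordinates off the basepoint or $<m$ of its $T$-coordinates off the basepoint; said dually, the subspace $\alpha_{n-1}(S)\times X^T \ \cup\ X^S \times \alpha_{m-1}(T)$ of $X^S \times X^T$ lands inside $\alpha_{m+n-1}(S \amalg T) \subseteq X^{S\amalg T}$. This is exactly the statement that the external smash of the pairs $(X^S, \alpha_{n-1}(S))$ and $(X^T, \alpha_{m-1}(T))$ maps to the pair $(X^{S\amalg T}, \alpha_{m+n-1}(S\amalg T))$, which on suspension spectra is a map $C^{\geq n}_X(S) \wedge C^{\geq m}_X(T) \to C^{\geq m+n}_X(S\amalg T)$. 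Assembling these over the Day convolution colimit, and using that $C^{\geq \bullet}_X$ is right Kan extended from $\F^{\inj,\op}$-data so that it's determined by its restriction there (Proposition \ref{bromide} plus the cofinality of $\F^{\inj,\leq n}_{/U} \to \F^{\leq n}_{/U}$ used in its proof), gives the desired map $C^{\geq n}_X \ast C^{\geq m}_X \to C^{\geq m+n}_X$ lifting $C_X \ast C_X \to C_X$.

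I would organize the writeup as: (1) recall from Theorem \ref{any} and the Fourier-transform discussion that it suffices to exhibit a filtered refinement of the comultiplication of the coalgebra $C_X$ in $\Fun(\F^\op,\Sp)^\wedge$, natural in $X$; (2) use Proposition \ref{bromide} to reinterpret $C^{\geq n}_X$ on $\F^{\inj,\op}$ as the relative suspension spectrum of $(X^{(-)},\alpha_{n-1})$; (3) observe the pigeonhole containment $\alpha_{n-1}(S)\times X^T \cup X^S \times \alpha_{m-1}(T) \subseteq \alpha_{m+n-1}(S\amalg T)$ and take relative suspension spectra to get the pairing on $\F^{\inj,\op}$; (4) right Kan extend to all of $\F^\op$ and pass to the coend. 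The main obstacle — and the part that needs genuine care rather than formal manipulation — is step (4): making the construction of the pairing $C^{\geq n}_X \ast C^{\geq m}_X \to C^{\geq m+n}_X$ sufficiently coherent and natural in $X$ as a map of functors (not just objectwise), given that $C^{\geq n}_X$ is defined via a right Kan extension $\Ran$ and Day convolution is a left Kan extension, so that the compatibility is a mixed Kan-extension juggling act. One clean way around this is to do the whole construction at the point-set level: $\Sigma^\infty_+$ of an honest space-level lax symmetric monoidal filtration of the functor $U \mapsto X^U$ by the subfunctors $\alpha_n$, which are visibly strictly functorial in $X$ and in $\F^{\inj,\op}$, and where the containment in step (3) is a literal inclusion of subsets; then apply $\Sigma^\infty_+$ and the monoidal coend machinery of Section \ref{mulstr} to land in filtered $\CAlg(\Sp)$. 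This is in the spirit of the paper's repeated remark that ``all such functors arising in the present work can be defined easily at the point-set level.''
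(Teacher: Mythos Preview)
Your reduction to constructing a lift $C_X^{\geq n} \ast C_X^{\geq m} \to C_X^{\geq m+n}$ over $C_X$, via Theorem \ref{any} and the symmetric monoidality of the coend against $A_R$, is exactly the paper's first move. The divergence is in how that lift is produced.

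The paper dispatches it in two lines by a universal property. Since $C_X^{\leq m+n-1}$ is right Kan extended from $\F^{\leq m+n-1,\op}$, any map $D \to C_X$ with $D|_{(\F^\op)^{<m+n}} \simeq 0$ factors essentially uniquely through $C_X^{\geq m+n}$; that is, $C_X^{\geq m+n}$ is final among such $D$. The Day convolution $C_X^{\geq n} \ast C_X^{\geq m}$ has this vanishing property: computing at $U$ with $|U|<m+n$, the colimit is indexed by maps $U \to S \amalg T$ in $\F$, and after passing to the cofinal subcategory where this map is surjective one has $|S|+|T|\le |U|$, so one of the two factors is zero. This is precisely your pigeonhole observation, but phrased so that it already lives on all of $\F^\op$ and delivers existence, essential uniqueness, and functoriality of the dotted arrow in one stroke.

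Your route through the geometric model $\alpha_\bullet$ encodes the same idea but is packaged in a way that creates the difficulty you flag in step (4). In particular, the assertion that ``$C_X^{\geq \bullet}$ is right Kan extended from $\F^{\inj,\op}$-data'' is not what Proposition \ref{bromide} says: that proposition identifies $C_X^{\leq n}|_{\F^{\inj,\op}}$ with $\Si^\iy_+\alpha_n$, and the cofinality $\F^{\inj,\leq n}_{/U}\to \F^{\leq n}_{/U}$ used in its proof is about simplifying the indexing for the right Kan extension \emph{from $\F^{\leq n,\op}$}, not about Kan extending from $\F^{\inj,\op}$ to $\F^\op$. So your extension step would need an independent argument. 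A point-set construction along the lines you sketch can presumably be made to work, but it is unnecessary: once you notice that $C_X^{\geq k}|_{(\F^\op)^{<k}}\simeq 0$ tautologically and that Day convolution adds these cardinality lower bounds, the universal property of the fiber does the rest.
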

\begin{proof}
By Theorem \ref{any}, the diagram of solid arrows arises as the coend of $A_R$ against the diagram
\[\begin{tikzcd}
C_X^{\geq n} \ast C_X^{\geq m} \ar{d} & & C_X^{\geq m + n} \ar{d} \\
C_X \ast C_X \ar{r}{\sim} & C_{X \amalg X} \ar{r} & C_X,
\end{tikzcd}\]
where $\ast$ denotes Day convolution. But by definition, $C_X^{\geq m + n}$ is final among objects of $\Fun(\F^\op, \Sp)$ mapping to $C_X$ whose restriction to $(\F^\op)^{< m + n}$ is zero. $C_X^{\geq n} \ast C_X^{\geq m}$ shares this property, so the diagram can be completed in an essentially unique way.
\end{proof}

\section{The layers of the filtration and Adams operations}\label{Adams}

For the remainder of this paper, we'll stick with $X = S^1$. In this case we'll be able to give a description of the layers of our filtration, and as a side effect elucidate its compatibility with Adams operations.

Since $C^{\leq n}_{S^1}$ and (a fortiori) $C^{\leq n - 1}_{S^1}$ are right Kan extended from $\F^{\leq n, \op}$, so is the fiber $C^{(n)}_{S^1}$. Let us determine the homology, after restriction to $\F^{\leq n, \op}$, of $C^{(n)}_{S^1}$:

\begin{prop} \label{bonjox}
We have
\[H\bb{Z}_*C^{(n)}_{S^1}(U) = \begin{cases} \Si^n \bb{Z} & |U| = n \\ 0 & |U| < n \end{cases}\]
with $\Si_n$ acting by sign on $\Si^n \bb{Z}$.
\end{prop}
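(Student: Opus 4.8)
The plan is to work with the explicit point-set model for $C^{\leq n}_{S^1}$ furnished by Proposition \ref{bromide}: after restriction to $\F^{\inj,\op}$, we have $C^{\leq n}_{S^1} \simeq \Si^\iy_+ \circ \alpha_n$, where $\alpha_n(U)$ is the subspace of $(S^1)^U$ consisting of tuples with at most $n$ coordinates away from the basepoint. Since $S^1$ has a CW structure with one $0$-cell (the basepoint) and one $1$-cell, the product $(S^1)^U$ gets the product CW structure, and $\alpha_n(U)$ is precisely its $n$-skeleton; thus $\alpha_{n-1}(U) = \alpha_n(U)^{(n-1)}$ sits inside $\alpha_n(U)$ as a subcomplex, and the cofiber $\alpha_n(U)/\alpha_{n-1}(U)$ is a wedge of $n$-spheres indexed by the $n$-cells of $(S^1)^U$ — that is, by the $n$-element subsets of $U$. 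Passing to suspension spectra and reduced homology, $H\bb{Z}_* C^{(n)}_{S^1}(U)$ (computed before Kan-extending, i.e. on $\F^{\leq n}$) is concentrated in degree $n$ and is free abelian on the $n$-subsets of $U$; in particular it vanishes for $|U| < n$ and is $\Si^n \bb{Z}$ (rank one) for $|U| = n$. Actually, one has to be slightly careful: $C^{(n)}_{S^1}$ is the fiber, not cofiber, of $C^{\leq n}_{S^1} \to C^{\leq n-1}_{S^1}$, so there is a degree shift; I would phrase this via the cofiber sequence $C^{\geq n}_{S^1} \to C^{\geq n-1}_{S^1} \to C^{(n-1)}_{S^1}$ appropriately, or simply note that for the \emph{restricted} functors on $\F^{\leq n}$ the map $C^{\leq n}_{S^1}\to C^{\leq n-1}_{S^1}$ is split (Corollary \ref{nuaya}), so fiber and desuspended cofiber agree and the homology is as claimed.

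The second point is the $\Si_n$-action on $H\bb{Z}_n C^{(n)}_{S^1}(U)$ for $|U|=n$, where $\Si_n = \Aut(U)$. Here I would argue that $\Aut(U)$ permutes the $n$ one-cell factors of $(S^1)^U$, hence acts on the unique top cell of the product CW complex by the sign of the permutation: each transposition of two $S^1$-factors swaps two of the $1$-cells whose product is the top $n$-cell, and swapping two adjacent factors in a smash/product of copies of $S^1$ induces the degree $-1$ map on $S^1 \wedge S^1 \cong S^2$ (equivalently, on $H\bb{Z}_2(S^1\times S^1)/(\text{lower})$). So the top-cell generator picks up a sign under each transposition, giving the sign representation $\Si^n\bb{Z}$ with $\Si_n$ acting by sign. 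I would make this precise by identifying $\alpha_n(U)/\alpha_{n-1}(U)$ for $|U|=n$ with $(S^1)^{\wedge U}$ (the smash product over $U$), whose reduced homology in degree $n$ is well known to be the sign representation of $\Aut(U)=\Si_n$ — this is the standard fact that $\td H_n((S^1)^{\wedge n})$ is the sign rep, which can be cited or checked directly via the shuffle/Künneth formula.

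The main obstacle is bookkeeping rather than depth: getting the fiber-versus-cofiber shift right and confirming that Kan-extending from $\F^{\leq n,\op}$ does not disturb the values on objects of cardinality $\leq n$ (it does not, since right Kan extension along a fully faithful inclusion restricts back to the identity, and $\F^{\leq n,\op}\hookrightarrow\F^{\op}$ is fully faithful, so the formula in the statement — which only asserts values for $|U|\leq n$ — is exactly the value of the restricted functor). The one genuinely non-formal input is the sign-representation statement for the top homology of $(S^1)^{\wedge n}$ under the permutation action, and I would treat it as standard, giving at most a one-line justification via the fact that the swap map on $S^1\wedge S^1$ has degree $-1$.
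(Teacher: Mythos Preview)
Your argument is correct and reaches the same conclusion, but the route is genuinely different from the paper's. The paper argues more indirectly: it observes that $H_n$ of the fiber is $H_n(T_0)$ (top homology of the $n$-torus, with its sign action), and then shows $H_i(L)=0$ for $i<n$ by proving that $z_i:H_i(T_0)\to H_i(T_1)$ is an isomorphism via a rank count --- both groups are free of rank $\binom{n}{i}$, and the projections to the $i$-dimensional subtori assemble into compatible isomorphisms with $\bigoplus_{U\subset[n],|U|=i}\bb{Z}$. Your approach instead identifies the fiber outright: using the splitting from Corollary~\ref{nuaya} (applied with $n-1$ in place of $n$, and restricted to $\F^{\leq n}$ where $C_X=C^{\leq n}_X$), the fiber of the projection is the cofiber of the section $\Si^\iy_+\alpha_{n-1}(U)\hookrightarrow\Si^\iy_+\alpha_n(U)$, i.e.\ $\Si^\iy\bigl(\alpha_n(U)/\alpha_{n-1}(U)\bigr)=\Si^\iy(S^1)^{\wedge U}$ for $|U|=n$. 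This is cleaner and makes the sign action transparent. The paper's argument, on the other hand, never needs to name the projection map or its section; it works purely with the Kan-extension description and numerical homology data.

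One small wording issue: your phrase ``fiber and desuspended cofiber agree'' is a tautology in any stable category and is not what the splitting buys you. What you actually use is that when $\psi:A\to B$ admits a section $s$, the fiber of $\psi$ coincides (without shift) with the cofiber of $s$; that is the step where Corollary~\ref{nuaya} enters. With that correction the argument is clean.
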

\begin{proof}
We need more notation. Let $T_0 := C^{\leq n}_{S^1}([n])$, the suspension spectrum of an $n$-torus, and let $T_1 := C^{\leq n - 1}_{S^1}([n])$, which is the suspension spectrum of a certain $(n-1)$-skeleton of an $n$-torus, as described in Proposition \ref{bromide}.

Clearly the projection
\[z : C^{\leq n}_{S^1} \to C^{\leq n - 1}_{S^1}\]
is an equivalence when evaluated on sets of cardinality less than $n$, so it suffices to determine the fiber of $z$ on $\F^{(n), \op}$. Denote this fiber $L$; certainly $H_n(L)$ is $\bb{Z}$ with the sign action of $\Si_n$, since this is $H_n(T_0)$. We aim to show that $H_i(L) = 0$ for $i < n$.

 For each $i < n$, we have to show that the map
\[z_i : H_i(T_0) \to H_i(T_1)\]
is an isomorphism. Observe that both groups are free of rank $\binom n i$. For every injective map $g: [i] \to [n]$, we get projections
\[T_0, T_1 \to \Si^\iy_+ (S^1)^i\]
which on $H_i$, by Proposition \ref{bromide}, induce the projection onto the $\bb{Z}$-factor corresponding to $g$. The diagram
\[\begin{tikzcd}
T_0 \ar{r} \ar{d} & T_1 \ar{ld} \\
\underset{U \inc [n], |U| = i} \bigvee \Si^\iy_+ (S^1)^U 
\end{tikzcd}\]
homotopy commutes by functoriality, and on taking the $i$th homology we get the commutative diagram
\[\begin{tikzcd}
H_i(T_0) \ar{r}{z_i} \ar{d} & H_i(T_1) \ar{ld} \\
\underset{U \inc [n], |U| = i} \bigvee \bb{Z}.
\end{tikzcd}\]
The vertical and diagonal maps are isomorphisms, so that $z_i$ is an isomorphism. This completes the proof.
\end{proof}
Continuing the notation of Proposition \ref{bonjox}, we'll write $L$ for the restriction of $C^{(n)}_{S^1}$ to $\F^{\op, \leq n}$.
\begin{lem} \label{yutonx}
The homology of $L$ determines it up to equivalence.
\end{lem}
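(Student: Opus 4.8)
The plan is to exhibit $L$ as a functor $\F^{\op, \leq n} \to \Sp$ that is determined by its $H\bb{Z}$-homology, which by Proposition \ref{bonjox} is concentrated in degree $n$, is zero on sets of cardinality $< n$, and is the sign representation $\Si^n \bb{Z}$ on sets of cardinality exactly $n$. First I would observe that since $L$ is right Kan extended from $\F^{(n), \op}$ — the full subcategory on sets of cardinality exactly $n$ — and since $\Aut([n]) = \Si_n$ acts on the single object of $\F^{(n), \op}$, a functor out of $\F^{\op, \leq n}$ that is right Kan extended from $\F^{(n), \op}$ is the same data as a spectrum with $\Si_n$-action, namely $L([n])$; the rest of $L$ is then forced by the Kan extension formula (and indeed vanishes on smaller sets, consistent with Proposition \ref{bromide}). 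So it suffices to show that the $\Si_n$-spectrum $L([n])$ is determined by its homology.

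Next I would identify $L([n])$ concretely. By Proposition \ref{bonjox}, $H\bb{Z}_* L([n])$ is $\Si^n \bb{Z}$ (sign action) concentrated in degree $n$; the same computation shows $L([n])$ is $(n-1)$-connected, and a connectivity/Hurewicz argument gives that $L([n])$ is equivalent, nonequivariantly, to $\Si^n \bb{S}$ after, say, rationalization — but equivariantly we want more. The cleanest route: recall from Proposition \ref{bromide} that $C^{\leq n}_{S^1}([n]) = \Si^\iy_+(S^1)^n = \Si^\iy_+ T^n$ and $C^{\leq n-1}_{S^1}([n]) = \Si^\iy_+(\text{skel}_{n-1} T^n)$, both as $\Si_n$-spectra with $\Si_n$ permuting the torus coordinates. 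Thus $L([n])$ is the cofiber (up to a shift/sign) of the inclusion of the $(n-1)$-skeleton, i.e. $L([n]) \simeq \Si^\iy (T^n / \text{skel}_{n-1} T^n) \simeq \Si^\iy S^n$ where $S^n$ is the top cell, with $\Si_n$ acting by permuting coordinates — which is exactly the representation sphere $S^{\bb{R}^n}$ (or a wedge summand thereof), carrying the sign action on $\pi_n$. So $L([n]) \simeq \Si^{\bb{R}^n} \bb{S}$ as a $\Si_n$-spectrum, or at any rate an explicit such sphere.

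The remaining point — and the one I expect to be the main obstacle — is to argue that a $\Si_n$-spectrum $Y$ whose underlying spectrum is a sphere $\Si^n \bb{S}$, with $\Si_n$ acting on $\pi_n Y$ by sign, is determined up to equivalence by this data. Nonequivariantly this is the statement that an $(n-1)$-connected spectrum with $\pi_n \cong \bb{Z}$ and $\pi_{n+1} = 0$... but of course the higher homotopy groups of $\Si^n \bb{S}$ are not zero, so "homology determines it" must be interpreted correctly: what is meant is that among functors of the given shape (right Kan extended, with the specified homology) there is an essentially unique one, because the space of $\Si_n$-equivariant self-maps inducing the identity on $H\bb{Z}_n$ is connected. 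I would establish this by a Postnikov/obstruction-theoretic argument: the relevant obstruction and uniqueness groups are equivariant cohomology groups $H^*_{\Si_n}(-; \pi_* (\text{sphere}))$ with the sign coefficient system, and because $H\bb{Z}_* L$ has been pinned down, the comparison map from $L$ to the "model" (the suspension-spectrum-of-representation-sphere construction) is an $H\bb{Z}_*$-isomorphism of $(n-1)$-connected $\Si_n$-spectra, hence an equivalence by the equivariant Whitehead theorem applied levelwise on fixed points — or more simply, since both sides are already known to be the same explicit object $\Si^{\bb{R}^n}\bb{S}$, there is nothing further to check. In the write-up I would lead with the explicit identification of $L([n])$ via Proposition \ref{bromide}, since that makes the "up to equivalence" claim immediate and sidesteps the obstruction theory; the homological statement of Proposition \ref{bonjox} then serves to confirm the $\Si_n$-action is the sign action, which is the only ambiguity in naming the representation.
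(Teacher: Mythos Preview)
Your approach is quite different from the paper's, and it has a real gap.

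The paper argues as follows. Given another functor $L'$ together with an isomorphism $H\bb{Z} \wedge L' \simeq H\bb{Z} \wedge L$, it uses the end formula $\Nat(L', L) \simeq \int_{\F^{\op, \leq n}} \Hom(L'(-), L(-))$ from Proposition \ref{borlox}. Since both $L$ and $L'$ take values only in shifted spheres and contractible spectra, the map
\[
H\bb{Z} \wedge \Hom(L'(S), L(T)) \to \Hom(H\bb{Z} \wedge L'(S), H\bb{Z} \wedge L(T))
\]
is an equivalence for each pair $(S,T)$; the end being finite, smashing with $H\bb{Z}$ commutes with it, yielding $H\bb{Z} \wedge \Nat(L', L) \simeq \Nat(H\bb{Z} \wedge L', H\bb{Z} \wedge L)$. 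The given isomorphism then determines a class in $H_0 \Nat(L', L)$, and any representative natural transformation is an $H\bb{Z}_*$-isomorphism, hence (again because the values are spheres) an equivalence. This same argument is what is recycled in Corollary \ref{banvox}.

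Your reduction to $\Si_n$-spectra via right Kan extension does not go through. You assert that $L$ is right Kan extended from $\F^{(n),\op}$ and that this extension vanishes on sets of smaller cardinality, but neither claim is justified: for $|U| < n$ the value of the right Kan extension is a limit over the category of maps $V \to U$ in $\F$ with $|V| = n$, which is far from empty, and there is no reason this limit of copies of $S^n$ should be contractible. What is true is that $L$ happens to be zero on sets of cardinality $< n$, but a functor that vanishes off a full subcategory need not be Kan extended from it. So the passage from functors on $\F^{\op, \leq n}$ to bare $\Si_n$-spectra is not valid as stated, and in particular cannot be applied to an arbitrary $L'$.

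Even granting the reduction, your third paragraph never constructs a map $L' \to L$: the Whitehead-style line presupposes one, and ``both sides are already known to be the same explicit object'' is circular, since that is exactly the content of the lemma. The paper's end-formula trick is precisely the device that manufactures the comparison map out of the $H\bb{Z}$-equivalence. Your explicit identification of $L([n])$ via Proposition \ref{bromide} is pleasant geometry, but it tells you what $L$ is, not why nothing else with the same homology can differ from it.
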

\begin{proof}
Let 
\[L' : \F^{\op, \leq n} \to \Sp\]
be another functor together with an isomorphism between $H \bb{Z} \wedge L'$ and $H \bb{Z} \wedge L$. We claim that there is an equivalence
\[H \bb{Z} \wedge \Nat(L', L) \to \Nat(H \bb{Z} \wedge L', H \bb{Z} \wedge L).\]
 Indeed, by Proposition \ref{borlox}, we have maps
\begin{align*} H \bb{Z} \wedge \Nat(L', L) & \simeq H \bb{Z} \wedge \int_{\F^{\op, \leq n}} \Hom(L'(-), L(-)) \\
& \simeq  \int_{\F^{\op, \leq n}} H \bb{Z} \wedge \Hom(L'(-), L(-)) \\ 
& \to  \int_{\F^{\op, \leq n}} \Hom(H \bb{Z} \wedge L'(-), H \bb{Z} \wedge L(-)) \\
& \simeq \Nat(H \bb{Z} \wedge L', H \bb{Z} \wedge L). \end{align*}
Since $L'$ and $L$ take values only in shifted spheres and contractible spectra, the map
\[H \bb{Z} \wedge \Hom(L'(S), L(T)) \to \Hom(H \bb{Z} \wedge L'(S), H \bb{Z} \wedge L(T))\]
is an equvalence for all $S$ and $T$, so all maps in the chain are in fact equivalences. That means we have a class in $H \bb{Z}_0 \Nat(L', L)$ corresponding to the given isomorphism between $H \bb{Z} \wedge L'$ and $H \bb{Z} \wedge L$. Any natural transformation in the corresponding connected component induces an isomorphism on integral homology, and so is an equivalence.
\end{proof}

This makes it easy to write down a point-set model for $L$, if one is so inclined. We also get the following result, which will be useful in just a minute:

\begin{cor} \label{banvox}
Any natural endomorphism of $L$ which acts by multiplication by an integer $s$ on homology is equivalent to the multiplication-by-$s$ endomorphism on $L$.
\end{cor}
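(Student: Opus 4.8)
The plan is to deduce Corollary \ref{banvox} directly from Lemma \ref{yutonx}, exploiting the fact that the homology of $L$ is so rigid. First I would observe that the multiplication-by-$s$ natural endomorphism $m_s : L \to L$ certainly exists: it is the natural transformation that on each object $U$ is the stable map $s \cdot \id_{L(U)}$, which is visibly natural since every map in $\F^{\op, \leq n}$ induces a map of spectra commuting with multiplication by integers. By functoriality of the smash product with $H\bb{Z}$, the induced endomorphism $H\bb{Z} \wedge m_s$ of $H\bb{Z} \wedge L$ is again multiplication by $s$; equivalently, on the graded homology group $H\bb{Z}_*L(U)$, which by Proposition \ref{bonjox} is $\Si^n \bb{Z}$ when $|U| = n$ and $0$ otherwise, it acts by multiplication by $s$.

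Now suppose $f : L \to L$ is \emph{any} natural endomorphism acting by multiplication by $s$ on homology. Then $H\bb{Z} \wedge f$ and $H\bb{Z} \wedge m_s$ are both natural endomorphisms of $H\bb{Z} \wedge L$ inducing the same map $\times s$ on $H\bb{Z}_* L(U)$ for every $U$. Since $L$ takes values in shifted spheres and contractible spectra, the $H\bb{Z}$-homology of $L(U)$ is a free $\bb{Z}$-module concentrated in a single degree, so a self-map of $H\bb{Z} \wedge L(U)$ is determined by its effect on homology; combined with the identification $\Nat(H\bb{Z} \wedge L, H\bb{Z} \wedge L) \simeq \int_{\F^{\op,\leq n}} \Hom(H\bb{Z} \wedge L(-), H\bb{Z} \wedge L(-))$ from Proposition \ref{borlox}, this means $H\bb{Z} \wedge f$ and $H\bb{Z} \wedge m_s$ lie in the same path component of $\Nat(H\bb{Z} \wedge L, H\bb{Z} \wedge L)$ — indeed they are \emph{equal} after passing to $\pi_0$ of each mapping spectrum objectwise, and one checks the end computing natural transformations inherits this.

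The final step is to transport this back along the equivalence established in the proof of Lemma \ref{yutonx}: the map
\[H\bb{Z} \wedge \Nat(L, L) \toe \Nat(H\bb{Z} \wedge L, H\bb{Z} \wedge L)\]
is an equivalence (all the intermediate maps in that chain were shown to be equivalences because the source and target of $L$ take values only in shifted spheres and contractible spectra). Applying this equivalence on $\pi_0$, the classes $[f]$ and $[m_s]$ in $H\bb{Z}_0 \Nat(L, L)$ must agree since their images in $H\bb{Z}_0 \Nat(H\bb{Z} \wedge L, H\bb{Z} \wedge L)$ agree. A priori this only says $f$ and $m_s$ agree after smashing the mapping spectrum with $H\bb{Z}$, but the argument at the end of Lemma \ref{yutonx} handles exactly this: any natural transformation $L \to L$ in the path component of $\Nat(L,L)$ detected by the $H\bb{Z}$-homology class of $m_s$ induces multiplication by $s$ on homology, hence (by the rigidity of $L$, i.e. the mapping-spectrum-is-detected-on-homology argument again, now applied objectwise to conclude $f_U$ is an equivalence onto its image once we know $f_U - (m_s)_U$ is zero on homology up to the relevant connectivity) is homotopic to $m_s$. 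I expect the main obstacle to be the bookkeeping in this last transition from ``equal after $\wedge H\bb{Z}$ on the mapping spectrum'' to ``homotopic as natural transformations'': one must be slightly careful that detecting a self-map of a wedge of shifted spheres on $H\bb{Z}$-homology genuinely pins it down, which uses that the relevant spheres are $S^n$ (stably), where $[S^n, S^n] = \bb{Z}$ maps isomorphically to $\Hom(H_n, H_n)$ — so there is no room for a difference. This is the same input already used in Lemma \ref{yutonx}, so no new ideas are needed, only care.
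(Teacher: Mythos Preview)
Your approach is exactly what the paper does: its entire proof reads ``The proof is the same as that of Lemma~\ref{yutonx},'' and you have faithfully unpacked that sentence by rerunning the chain of equivalences $H\bb{Z} \wedge \Nat(L,L) \simeq \Nat(H\bb{Z} \wedge L, H\bb{Z} \wedge L)$ with $L' = L$ and observing that $f$ and $m_s$ land in the same component on the right. Your explicit unease in the final paragraph --- passing from equality in $H\bb{Z}_0\,\Nat(L,L)$ back to $\pi_0\,\Nat(L,L)$ --- is a subtlety the paper leaves equally implicit, and your appeal to $[S^n,S^n] \cong \Hom(H_n,H_n)$ is precisely the objectwise input invoked there; you are not missing any idea the paper supplies.
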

\begin{proof}
The proof is the same as that of Lemma \ref{yutonx}.
\end{proof}

On to Adams operations. The formula 

\[\THH(R) = R \ot S^1\]
immediately suggests a family of potentially interesting operations
\[\{\psi^r \, | \, r \in \bb{Z}\}\]
given by
\[\psi^r = \id \ot c_r : R \ot S^1 \to R \ot S^1\]
where $c_r$ is a degree $r$ endomorphism of $S^1$. These are called the \emph{Adams operations}, and one can refer to \cite[\S 11]{ABGHLM} for a detailed account of their properties and history.

From our coend formula point of view, $\psi^r$ arises from the natural transformation

\[[r] : C_{S^1} \to C_{S^1}\]
acting by multiplication by $r$ on each torus. Note that on the top homology of an $n$-torus, $r$ acts by multiplication by $r^n$. By naturality of Kan extensions, $[r]$ descends to a natural transformation
\[[r]^{\leq n} : C_{S^1}^{\leq n} \to C_{S^1}^{\leq n}\]
and thus a natural operation
\[(\psi^r)^{\leq n} : \THH^{\leq n} \to \THH^{\leq n}\]
Moreover, by naturality of the formation of homotopy fibers, we have a homotopy commutative diagram

\[ \begin{tikzcd}
C_{S^1}^{(n)} \ar{r} \ar{d}{[r]^{(n)}} & C_{S^1}^{\leq n} \ar{r} \ar{d}{[r]^{\leq n}} & C_{S^1}^{\leq n- 1} \ar{d}{[r]^{\leq n -1}} \\
C_{S^1}^{(n)} \ar{r}  & C_{S^1}^{\leq n} \ar{r} & C_{S^1}^{\leq n- 1}  
 \end{tikzcd} \]
 
The induced natural transformation $[r]_L : L \to L$ acts by multiplication by $r^n$ on homology, and by Corollary \ref{banvox}, $[r]_L$ is multiplication by $r^n$. Since $C_{S^1}^{(n)}$ is right Kan extended from $L$, we have
\[[r] = r^n : C_{S^1}^{(n)} \to C_{S^1}^{(n)}\]
and so:
\begin{prop}
Defining 
\[(\psi^r)^{(n)} : \THH^{(n)} \to \THH^{(n)}\]
by naturality of homotopy fibers, we have
\[(\psi^r)^{(n)} = r^n.\]
\end{prop}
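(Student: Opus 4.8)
The plan is to run everything through the coend formula. By Theorem \ref{any} and the definitions of $\mf{H}^{\leq n}$ and $\mf{H}^{(n)}$, the operation $(\psi^r)^{(n)}$ is nothing but the image of the natural transformation $[r]^{(n)} : C^{(n)}_{S^1} \to C^{(n)}_{S^1}$ under the functor $\int^\F (-) \wedge A_E : \Fun(\F^\op, \Sp) \to \Sp$: the operation $\psi^r$ corresponds under Theorem \ref{any} to the coend of $[r]$ against $A_E$, and passage to the filtered quotients and to the layers is by the same formation of homotopy fibers on the $C$-side and the $\THH$-side. Now $\int^\F (-) \wedge A_E$ is a colimit over $\twa^\F$ postcomposed with smashing, hence preserves all colimits, hence is additive; in particular it carries a multiplication-by-$r^n$ self-map to a multiplication-by-$r^n$ self-map. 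So it suffices to prove the essentially combinatorial assertion $[r]^{(n)} = r^n$ as an endomorphism of $C^{(n)}_{S^1}$.

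For this, recall from the discussion preceding Proposition \ref{bonjox} that $C^{\leq n}_{S^1}$ and $C^{\leq n-1}_{S^1}$, and therefore their fiber, are right Kan extended from $\F^{\leq n, \op}$; that is, $C^{(n)}_{S^1} = \Ran_{\F^{\leq n, \op}}^{\F^\op} L$. By naturality of right Kan extension and of homotopy fibers, $[r]^{(n)}$ is itself the right Kan extension of its restriction $[r]_L : L \to L$, so it is enough to show $[r]_L = r^n$ on $L$.

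Finally I would read off the action of $[r]_L$ on homology and appeal to rigidity. By Proposition \ref{bonjox}, $H\bb{Z}_* L(U)$ is $\Si^n \bb{Z}$ when $|U| = n$ and vanishes otherwise, and this $\Si^n \bb{Z}$ is the top homology $H_n$ of an $n$-torus $(S^1)^n$. Since $[r]$ acts on $C_{S^1}$ by the degree-$r$ map $c_r$ in each circle coordinate — multiplication by $r$ on $H_1(S^1)$ — it acts by $r^n$ on $H_n((S^1)^n)$, hence by $r^n$ on $H\bb{Z}_* L$. Corollary \ref{banvox} then forces $[r]_L$ to be literally the multiplication-by-$r^n$ endomorphism of $L$, which completes the chain of reductions.

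I do not expect a serious obstacle: the one genuinely nontrivial ingredient, the rigidity statement Corollary \ref{banvox} (resting on Lemma \ref{yutonx} and Proposition \ref{borlox}), is already in hand. The only thing that needs care is the bookkeeping in the first two paragraphs — checking that the three functorial operations involved (restrict to $\F^{\leq n, \op}$ and Kan-extend back; take homotopy fibers of the filtration maps; take the coend against $A_E$) are all compatible, so that the equality ``$[r]$ is degree $r$ on each circle'' propagates to ``$(\psi^r)^{(n)} = r^n$'' with no coherence surprises.
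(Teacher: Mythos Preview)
Your proposal is correct and matches the paper's argument essentially line for line: reduce to $[r]^{(n)}$ on $C^{(n)}_{S^1}$, pass to $L$ by right Kan extension, compute the action on homology as $r^n$ via the top homology of the $n$-torus, and invoke Corollary \ref{banvox}. The only difference is that you make explicit the additivity of $\int^\F (-)\wedge A_E$ to push the scalar through to $\THH^{(n)}$, whereas the paper leaves this step implicit.
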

Loday's Hodge filtration \cite{Lod89} is defined as the $\gamma$-filtration associated to a $\lambda$-ring structure, and this kind of compatibility with the Adams operations is a prominent characteristic of any $\gamma$-filtration \cite[Proposition 3.1]{FL}. It should thus be a prerequisite for any putative Hodge filtration on $\THH$.
\bibliographystyle{alpha}
\bibliography{Mybib}

\end{document}